\documentclass[a4paper,12pt]{article}
\usepackage[hmargin=2.5cm,vmargin=1.5cm]{geometry}
\usepackage{amsfonts}
\usepackage{amssymb}
\usepackage{amsmath}
\usepackage{amsthm}
\usepackage{amscd}
\usepackage{geometry}
\usepackage{array}
\usepackage{authblk}
\usepackage{float}

\usepackage{pst-node}
\usepackage{tikz-cd}
\usepackage{tikz-qtree}

\usepackage{mathtools}
\DeclarePairedDelimiter\ceil{\lceil}{\rceil}

\newtheorem{definition}{Definition}[section]
\newtheorem{theorem}{Theorem}[section]
\newtheorem{lemma}{Lemma}[section]
\newtheorem{proposition}{Proposition}[section]
\newtheorem{remark}{Remark}[section]
\newtheorem{corollary}{Corollary}[section]
\newtheorem{example}{Example}[section]

\newtheorem{assumption}{Assumption}[section]

\newcommand{\om}{\textnormal{\textbf{o}}}

\newcommand{\opsi}{\textnormal{\textbf{$\psi$}}}

\newcommand{\som}[2]{\overset{#2}{\underset{i=#1} {\textnormal{\large\textbf{O}}}    }}

\newcommand{\spsi}[2]{\overset{#2}{\underset{i=#1} {\textnormal{\Large\textbf{$\psi$}}}    }}

\begin{document}

\title{A unified analytic approach to O-metric inequalities and applications to fixed point theory}

\date{}
\author[1]{\small Olaoluwa, Hallowed O.}
\author[2]{\small Ige, Aminat O.}
\author[3]{\small Olaleru, Johnson O.}

\affil[1,3]{\small Department of Mathematics, University of Lagos, Akoka, Nigeria}
\affil[2]{\small Department of Mathematics, Lagos State University, Nigeria}

\affil[1]{\small email address:  holaoluwa@unilag.edu.ng}
\affil[2]{\small email address: aminat.ige@lasu.edu.ng (corresponding author)}
\affil[3]{\small email address: jolaleru@unilag.edu.ng}

\maketitle

\begin{abstract}
\noindent
The concept of O-metrics was recently introduced as a generalization of several metric-type structures by replacing the addition operation of the standard triangle inequality with a binary operation that may fail to be associative. This non-associativity naturally to generalized polygon inequalities and patterned compositions. In this work we develop an analytic framework for inequalities arising in such settings. By introducing generalized $\om$-series governed by admissible control functions $\varphi$, we establish a separation principle that resolves inequalities of the form $u \leq v \, \om \, \varphi(k,u)$. This result provides convergence criteria for patterned compositions and determines intervals for the admissible contraction parameter $k$. As application, some fixed point theorems of 
\'{C}iri\'{c} type are obtained for mappings on O-metric spaces, as well as corresponding results for b-metric spaces. The approach provides a unified analytic framework for studying contractive conditions and iterative processes in generalized metric spaces.
\vspace{5mm}
	
	\noindent{\bf Keywords:} O-metric spaces; generalized $\om$-series; separation principle; contractive mappings; fixed point theorem; generalized triangle inequality. 
\\

\noindent{\bf MSC 2010 Classification:} 54E35, 26D15, 47H10 
	
\end{abstract}

\section{Introduction and Preliminaries}

Non‑associativity presents a non-trivial obstacle in extending metric convergence and fixed point theory beyond classical settings. In O-metric spaces, the ``distance" between points is composed through a binary operation $\om$ that need not be associative, forcing convergence arguments to rely on families of polygon inequalities rather than sums. While fixed point theory has motivated much of the recent literature (see \cite{rhoades}, \cite{Bakhtin1989}, \cite{Czerwik1993}, \cite{openball}, \cite{suzuki}, \cite{igez} and other works of the authors in \cite{Olaleru2009}, \cite{multiplito}, \cite{Olaoluwa2015}, \cite{inclo}), the decisive difficulty lies in controlling iterated non‑associative compositions.
\\

\noindent
In this paper we introduce a calculus of generalized $\om$-series indexed by patterns that distinguish non‑associative compositions, then prove a general principle for the separation of variables in some inequalities governed by control functions, without the use of classical algebraic inverses. The results on generalized series are independent of any self‑map and apply broadly to iterative schemes in O-metric-type structures. Fixed point theorems then appear naturally as applications.
\\


\noindent
We recall the definition of O-metrics (see \cite{bookchap}-\cite{ourpaper2}):  given a non-negative binary operation $\om$ on pairs of elements of an interval $I_a$ of non-negative real numbers containing some non-negative real  $a \in [0,\infty)$, an $\om$-metric on a non-empty set $X$ is a function  $d_\om: X \times X \to I_a$ such that
\begin{itemize}
\item[$(O_1)$] $d_\om(x,y)=a$ if and only if $x=y$;
\item[$(O_2)$] $d_\om(x,y)=d_\om(y,x)$;
\item[$(O_3)$] $d_\om(x,z) \leq d_\om(x,y) \, \om \, d_\om(y,z)$.
\end{itemize}
The condition $(O_3)$, called the triangle $\om$-inequality, together with condition $(O_1)$, naturally extend the notion of metrics in the classical sense and impose the super-idempotence $a \le a \, \om \, a$ at the self-similarity metric value $a$. The terminologies $a$-upward (respectively, $a$-downward) $\om$-metric are employed when $d_\om(x,y) \ge a$ (respectively, $d_\om(x,y) \le a$, in which case, it is assumed that $a>0$). The class of O-metrics contains all possible $\om$-metrics.

\subsection{Polygon Inequalities}

In the literature of fixed point theory,  binary operations often used for metric-types include addition for metrics, the maximum operation for ultra-metric spaces, scaled addition $u \om v = s(u+v)$ for some constant $s \ge 1$ for b-metrics, multiplication for multiplicative metrics, etc. It is worthy to note that, from a fixed point theory perspective, for any binary operation
\begin{equation}\label{obvious}
u \,  \om  \, v=\varphi(\varphi^{-1}(u)+\varphi^{-1}(v)),
\end{equation}
where $\varphi$ is a bijection $\varphi: [0, \infty) \to [a,\infty)$ such that $\varphi(0)=a$, 
fixed point theorems on metric spaces are transposable to $\om$-metric spaces as the $\om$-metric $d_\om$ is such that $\varphi^{-1} \circ d_\om$ is a metric and vice-versa. In fact, the binary operations satisfying (\ref{obvious}) are all associative, and the triangle $\om$-inequality is easily extended to a polygonal inequality
\begin{equation}
d_\om(x,y) \le \varphi \left(\sum_{i=0}^n \varphi^{-1}(d_\om (x_{i},x_{i+1})) \right)
\end{equation} 
for points $x=x_0, x_1,x_2,\ldots, x_{n+1}=y$. 
\\

\noindent
For a monotone nondecreasing binary operation $\om$ is not necessarily associative  (i.e. $u \, \om \, (v \, \om \, w)) \neq (u \, \om \,v) \, \om \, w$ in general), many inequalities are possible when applying the triangle $\om$-inequality on at least three points. Indeed, if $x_0,x_1,x_2,x_3$ are points in the O-metric space $(X,d_\om,a)$, the following inequalities hold: 
$$
\begin{array}{lll}
d_\om(x_0,x_3) &\leq & d_\om(x_0,x_1) \, \om \left(d(x_1,x_2) \, \om \,d_\om(x_2,x_3)\right)
\\
d_\om(x_0,x_3) &\leq & \left(d_\om(x_0,x_1) \, \om \, d_\om(x_1,x_2)\right) \, \om \, d_\om(x_2,x_3).
\end{array}$$
In fact, for points $x_0,x_1,x_2,\ldots,x_{n+1}$, with $n \geq 1$, there are  at most $C_n$ possible inequalities, where $C_{n}:=\frac{1}{n+1} {{2n}\choose{n}}$ is a Catalan number (see \cite{assoc2}). We write any of the following polygon inequalities
\begin{equation}
d_\om(x_0,x_{n+1}) \leq h\left(d_\om(x_0,x_1),d_\om(x_1,x_2),\ldots,d_\om(x_{n},x_{n+1})\right)   ~ \forall h \in \Omega_n, \label{comp_1}
\end{equation}
or simply,
\begin{equation}
d_\om(x_0,x_{n+1})  \leq d_\om(x_0,x_1) \, \om \, d_\om(x_1,x_2) \, \om\,  \cdots \, \om \,  d_\om(x_{n},x_{n+1}),\label{comp_2}
\end{equation}
where for non-negative real numbers $t_0,t_1,\ldots,t_n$, the expression $t_0 \, \om \,t_1 \, \om \, \cdots \, \om \,t_n$ denotes one of the $C_n$ possibilities of composing successively the terms $t_i$ starting from $t_0$ by the binary operation $\om$, and $\Omega_n$ denotes the set (of order at most $C_n$) of functions $h:[0,\infty)^{n+1} \to [0,\infty)$ defined by  
$$h(t_0,t_1,\ldots,t_{n})=t_0 ~\om~ t_1 ~\om \cdots \om ~t_n.$$

\subsection{Topology and Convergence}

Let $(X,d_\om,a)$ be an O-metric space. The $\om$-metric $d_\om$ allows the definition of:
\begin{enumerate}
\item a topology on $X$ called the $\om$-topology (or more broadly, the O-metric topology):
\begin{equation}
\mathcal{T}=\left\{A \subset X: ~ \forall x \in A ~\exists r>0,~  B(x,r) \subset A \right\},
 \label{topology}
\end{equation}
where the set $B(x,r):=\{y \in X: ~ |d_\om(x,y)-a|<r\}$ is called the open ball (although not necessarily an open set in the O-metric topology) centered on $x \in X$ and with radius $r>0$;

\item \textbf{O-convergent} sequences as sequences $\{x_n\}$ of points in $X$ such that 
\begin{equation}\label{O-conv} 
\exists x \in X \mbox{ such that } \displaystyle \lim_{n \to \infty} d_\om(x_n,x)=a \quad \mbox{ in $\mathbb{R}$},
\end{equation}
in which case $x$ is called the O-limit (or simply limit) of the sequence $(x_n)$ and we  write  $x_n \xrightarrow{\text{O}} x$;

\item Cauchy sequences as sequences  $\{x_n\}$ of points in $X$ such that:
\begin{equation}
\displaystyle \lim_{n,m \to \infty} d_\om(x_n,x_m)=a \quad \mbox{ in $\mathbb{R}$}.
\end{equation}

\item an \textbf{O-complete} O-metric space $(X,d_\om,a)$ as a space in which the Cauchy sequences are the  O-convergent sequences in $X$.
\end{enumerate}
\noindent
Whenever no further conditions are placed on the binary operation $\om$, these concepts do not follow laws on metric spaces: a sequence may have infinitely many O-limits, an O-convergent sequence may not even be a Cauchy sequence. However, every O-covergent sequence converges to each of its O-limits in the O-metric topology, hence, in the remaining part of the article, when no confusion arises, ``convergence" will mean ``O-convergence", ``limit" will mean ``O-limit" , and ``$x_n \to x$" will mean ``$x_n \xrightarrow{\text{O}} x$". In fact, the following holds:

\begin{proposition}[see \cite{ourpaper},\cite{ourpaper2}]
Let $(X,d_\om,a)$ be an O-metric space.
\begin{itemize}
\item[(i)] If $d_\om$ is $a$-upward, $a \, \om \, a =a$, and $\om$ is continuous  at $(a,a)$, then every O-convergent sequence is a Cauchy sequence.

\item[(ii)]
Convergent sequences in an O-metric space have unique limits if the following conditions simultaneously hold:
\begin{itemize}
\item[$(U_1)$]
$\om$ is continuous at points $(u,v)$ such that $u=a$ or $v=a$;
\item[$(U_2)$]
$\om$ is nondecreasing in both variables and either $u \, \om \, a =a \Leftrightarrow u=a$ for all $u \in I_a$, or $a \, \om \, u = a \Leftrightarrow u=a$ for all $u \in I_a$. 
\end{itemize}

\item[(iii)]
If $d_\om$ is $a$-upward, then every open ball is an open set if the following conditions simultaneously hold:
\begin{itemize}
\item[$(C_1)$] There exists $\gamma:[a,\infty) \times [a,\infty) \to \mathbb{R}$ such that $\gamma(r,u)>a$ and $\om(u,\gamma(r,u))\leq r$ for $u,r \geq a$ such that $u \in [a,r)$.
\item[$(C_2)$] $\om$ is increasing in both variables.
\end{itemize}
In such case, $X$ is also Hausdorff and O-convergence is equivalent to convergence in the O-topology.

\end{itemize}
\end{proposition}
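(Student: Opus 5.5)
The three parts can be proved independently, each straight from $(O_1)$--$(O_3)$ and the stated hypotheses on $\om$. Throughout, $x_n \to x$ means $d_\om(x_n,x)\to a$ in $\mathbb{R}$.

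For (i), let $x_n \to x$. By $(O_2)$ and $(O_3)$,
$$a \le d_\om(x_n,x_m) \le d_\om(x_n,x) \, \om \, d_\om(x,x_m),$$
where the lower bound uses that $d_\om$ is $a$-upward. As $n,m\to\infty$, the pair $(d_\om(x_n,x),d_\om(x,x_m))$ tends to $(a,a)$. Continuity of $\om$ at $(a,a)$ together with $a\,\om\, a=a$ sends the right-hand side to $a$, and squeezing gives $d_\om(x_n,x_m)\to a$. The only care needed is to treat the double limit honestly: given $\varepsilon>0$, pick $\delta$ from the continuity at $(a,a)$, then pick $N$ so that $|d_\om(x_k,x)-a|<\delta$ for all $k\ge N$.

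For (ii), suppose $x_n\to x$ and $x_n\to y$, and assume the first alternative of $(U_2)$. By $(O_3)$ and monotonicity, $d_\om(x,y)\le d_\om(x,x_n)\,\om\, d_\om(x_n,y)$. Let $u=d_\om(x,y)$. By $(U_1)$, $\om$ is continuous at $(a,a)$, so letting $n\to\infty$ gives $u\le a\,\om\, a$.

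This alone does not finish the argument, and this is the main obstacle: we must show $u=a$ without assuming $a\,\om\, a=a$ or upwardness. I would instead use the triangle inequality with $y$ placed so that the hypothesis $u\,\om\,a=a\Leftrightarrow u=a$ becomes applicable. Specifically, consider $d_\om(x,x_n)\le d_\om(x,y)\,\om\, d_\om(y,x_n)$. Letting $n\to\infty$ and using continuity at $(u,a)$ (available by $(U_1)$) gives $a\le u\,\om\, a$. Combining this with the previous bound via monotonicity, I would aim to force $u\,\om\,a=a$, possibly after first establishing $a\,\om\, a=a$ from $(O_1)$ and the $(U_2)$ equivalence with $u=a$. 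The equivalence then yields $u=a$, so $x=y$ by $(O_1)$. The second alternative of $(U_2)$ is handled symmetrically, using $d_\om(x,x_n)\le d_\om(x_n,y)\,\om\, d_\om(y,x)$ after applying symmetry. I expect the exact bookkeeping of the inequality directions here to be the delicate step.

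For (iii), take $y\in B(x,r)$, so $u:=d_\om(x,y)\in[a,a+r)$. Set $R=a+r$ and $\rho=\gamma(R,u)-a>0$, which is positive by $(C_1)$. For $z\in B(y,\rho)$ we have $d_\om(y,z)<\gamma(R,u)$. Then $(O_3)$ and the strict monotonicity $(C_2)$ give
$$d_\om(x,z)\le u\,\om\, d_\om(y,z) < u\,\om\,\gamma(R,u)\le R.$$
Hence $B(y,\rho)\subset B(x,r)$, and the ball is open.

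For the Hausdorff property, take $x\ne y$, so $d_\om(x,y)>a$, and choose small radii. If the balls $B(x,r)$ and $B(y,r)$ shared a point $z$, then $d_\om(x,y)\le d_\om(x,z)\,\om\, d_\om(z,y)$. I would use $(C_1)$ with $r$ set to $d_\om(x,y)$ to choose radii making this right-hand side too small, which gives a contradiction.

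Equivalence of convergence follows because the balls are open and form a neighbourhood base. A neighbourhood base at $x$ in $\mathcal{T}$ consists of the balls $B(x,r)$, and $x_n\in B(x,r)$ eventually for every $r>0$ is exactly $d_\om(x_n,x)\to a$.
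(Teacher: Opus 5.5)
The paper does not prove this proposition; it quotes it from the authors' earlier work. So there is no in-text argument to compare with. Your direct derivation from $(O_1)$--$(O_3)$ is correct in all three parts. Two steps you left as intentions should be written out, and both close without difficulty.

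\textbf{Part (ii).} The obstacle you anticipate is not there. Take $u=a$ in the equivalence of $(U_2)$: this gives $a\,\om\,a=a$ directly, with no appeal to $(O_1)$. Your first limit therefore already yields $u\le a$. Monotonicity in the first variable then gives $u\,\om\,a\le a\,\om\,a=a$. Combined with your second limit $a\le u\,\om\,a$, this forces $u\,\om\,a=a$, hence $u=a$ and $x=y$. The other alternative of $(U_2)$ is identical with $a\,\om\,u$ in place of $u\,\om\,a$, using continuity at $(a,u)$ from $(U_1)$.

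\textbf{Hausdorff step in part (iii).} Do not apply $(C_1)$ with $u=a$. Since $d_\om(x,z)$ can exceed $a$, that choice gives no control over the first argument of $\om$. Instead:
\begin{itemize}
\item Put $R=d_\om(x,y)>a$, pick any $u\in(a,R)$, and set $\beta=\gamma(R,u)>a$.
\item Suppose $z\in B(x,u-a)\cap B(y,\beta-a)$.
\item Then $d_\om(x,y)\le d_\om(x,z)\,\om\,d_\om(z,y)\le u\,\om\,d_\om(z,y)<u\,\om\,\beta\le R=d_\om(x,y)$, a contradiction.
\end{itemize}
Strictness from $(C_2)$ is needed only in the second variable. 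If you prefer a common radius, take $\min\{u-a,\beta-a\}$.

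Your arguments for part (i), the openness of balls, and the equivalence of O-convergence with $\mathcal{T}$-convergence are complete as written. In the openness step you correctly rely on strict monotonicity: with mere nondecreasingness you would only get $d_\om(x,z)\le a+r$.
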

\noindent
In the remaining part of the paper, we assume the following except when stated otherwise:
\begin{assumption}\label{mainass}
$(X,d_\om,a)$ is taken to be an $\om$-metric space, where $\om$ satisfies conditions $(U_1)$ and $(U_2)$, and $d_\om$ is $a$-upward. Therefore, every O-convergent sequence is a Cauchy sequence and has a unique O-limit.
\end{assumption}

\section{Generalized series}

To show that a sequence $\{x_n\}$ of points in an O-metric space  $(X,d_\om,a)$ is Cauchy, it suffices, via polygon inequalities, to show that
\[
\lim_{n,m \to \infty} h(d_\om (x_p,x_{p+1}), \ldots , d_\om(x_{q-1},x_q))=a, \quad \text{ with } p=\min\{n,m\}, ~ q=\max\{n,m\},
\]
for some $h \in \Omega_n$. 
This naturally leads to the study of the convergence of generalized $\om$-series to a limit determined by functions in $\Omega_n$ for large $n$. 
Examples demonstrate how classical additive and multiplicative series are recovered as special cases of generalized $\om$-series.

\subsection{Infinite generalized series}

Given a binary operation $\om$ on $\mathbb{R}$, and a sequence $\{t_n\}_{n \geq 0}$ of real numbers, the sequence $\{\omega^n\}_{n \geq 0}$ defined by the recursion
\begin{equation}
\left\{
\begin{array}{lll}
\omega^0 &=& t_0, 
\\
\omega^1 &=& t_0 \, \om \, t_1,
\\
\omega^n &=& h_{n+1}(t_0,t_1,t_2,\ldots,t_n), ~\mbox{ for } n \geq 2 \mbox{ and for some } h_{n+1} \in \Omega_n,
\end{array}
\right.
\end{equation}
 is called 
sequence of partial compositions of $\{t_n\}_{n \geq 0}$ following the pattern of functions $\{h_n\}_{n \in \mathbb{N}}$ and denoted $\omega^n= \som{0}{n} t_i.$ 
\\
The expression $\som{0}{\infty} t_i$ is called an infinite generalized series (or $\om$-series) following the  pattern of functions $\{h_n\}_{n \in \mathbb{N}}$.  When $\{\omega^n\}_{n \in \mathbb{N}}$ converges, we say that $\{t_n\}_{n \geq 0}$ is composable (or $\om$-composable) following the pattern of functions $\{h_n\}_{n \in \mathbb{N}}$, and write $$\lim_{n \to \infty} \omega^n =\som{0}{\infty} t_i.$$
\noindent
In the case of an associative binary operation $\om$, $\om$-series  can be written uniquely in the sense that the set $\Omega_n$ is a singleton for each integer $n \geq 2$. The following example can  easily be made as illustration:

\begin{example}
\textup{
Let $\{\omega^n\}_{n \geq 0}$ be the sequence of partial compositions of a sequence $\{t_n\}_{n \geq 0} \subset \mathbb{R}$, 
\begin{itemize}
\item[(a)] If $\om$ is the addition in $\mathbb{R}$ (i.e. when $\om(u,v)=u+v$), then $\om$-series are series in the usual sense: for $n \geq 0$,
$\som{0}{n} t_i=\displaystyle \sum_{i=0}^n t_i$;
\item[(b)] If $\om$ is the multiplication in $\mathbb{R}$ (i.e. when $\om(u,v)=uv$), then $\som{0}{\infty} t_i$ is the infinite product $\displaystyle \prod_{i=0}^\infty t_i$. 
\item[(c)] If $\om$ is the maximum function in $\mathbb{R}$, i.e. when $\om(u,v)=\max\{u,v\}$, then for $n \geq 0$, $\omega^n=\som{0}{n} t_i =\displaystyle \max_{0 \leq i \leq n} t_i$. In particular,
\begin{equation*}
\left\{
\begin{array}{lll}
\som{0}{\infty} t_i =\displaystyle \lim_{n \to \infty} t_n \mbox{ if the sequence of terms $\{t_n\}$ is nondecreasing};
\\
\som{0}{\infty} t_i=t_0 \mbox{ if $\{t_n\}$ is a non-increasing sequence}.
\end{array}
\right.
\end{equation*}
\end{itemize}
}
\end{example}
\noindent

\subsection{Patterned Iterations}

When $\om$ is not associative, it becomes necessary to interpret generalized $\om$-series as patterned compositions, providing a bridge between abstract series and concrete iterations.
\begin{definition}\label{pat}
Let $\om$ be a binary operation on $\mathbb{R}$, and let $\{\alpha_n\}_{n \in \mathbb{N}}$ be a sequence of non-negative integers, with $\alpha_n \in [0, n-1]$ for all $n \in \mathbb{N}$. 

\noindent
A sequence $\{h_n\}_{n \in \mathbb{N}}$ of functions $h_n \in \Omega_{n-1}$ is said to follow the pattern of integers $\{\alpha_n\}_{n \in \mathbb{N}}$ if
\begin{equation}
h_n(t_1,t_2,\ldots,t_n) = h_{\alpha_n}(t_1,t_2,\ldots,t_{\alpha_n}) \, \om \, h_{n-\alpha_n}(t_{\alpha_n+1},\ldots,t_n) ~~\forall n \geq 2.
\end{equation}
\noindent
If an $\om$-series $\som{0}{\infty} t_i$ following the pattern of functions $\{h_n\}_{n \in \mathbb{N}}$ is such that  $\{h_n\}_{n \in \mathbb{N}}$ follows the pattern of integers $\{\alpha_n\}_{n \in \mathbb{N}}$,  $\som{0}{\infty} t_i$ is also said to follow the pattern of integers $\{\alpha_n\}_{n \in \mathbb{N}}$. In such case,
\begin{equation*}
\som{1}{n}t_i = \left(\som{1}{\alpha_n} t_i \right) \, \om \, \left(\som{\alpha_n+1}{n} t_i  \right) =  
 \left(\som{1}{\alpha_n} t_i \right) \, \om \, \left(\som{1}{n-\alpha_n} t_{\alpha_n+i}  \right)
\end{equation*}
\end{definition}
\noindent
We state some examples below of infinite generalized series following some pattern of functions:
\begin{example}\label{doyouknowex}
\textup{
Let $g:[0,\infty) \to [0,\infty)$ be an additive function, in the sense that $g(u+v)=g(u)+g(v)$ for all $u,v \geq 0$. Consider the binary operation $\om(u,v)=g(u+v)$ on $[0,\infty)$. Let $\{t_n\}_{n \in \mathbb{N}}$ be a sequence of non-negative real numbers. 
\\
\textbf{1.} If $\som{1}{\infty}t_i$ is the $\om$-series following the pattern of integers $\{1\}_{n \in \mathbb{N}}$, then
\begin{equation*}
\som{1}{n+1}t_i  =\displaystyle \sum_{i=1}^{n}g^{(i)}(t_i)+ g^{(n)}(t_{n+1}).
\end{equation*}
In the particular case of the multiplication $g(u)=Lu$ by $L$, where $L >0$, 
\begin{equation}\label{doyouknow}
\som{1}{n+1}t_i=\displaystyle \sum_{i=1}^{n}L^{i} t_i+ L^{n}t_{n+1}.
\end{equation}
In this case, $\som{1}{\infty}t_i$ converges under any of the following conditions:
\begin{itemize}
\item[(i)] There is some $n_0 \in \mathbb{N}$ for which $t_n=0$ for $n \geq n_0$ (i.e. $\{t_n\}$ has a null tail); in fact, 
$\som{1}{\infty}t_i=\displaystyle \sum_{i=1}^{n_0}L^{i} t_i+ L^{n_0}t_{n_0+1}$.
\item[(ii)] $t_n=L^n$ for each $n$, and $L<1$, so that $\som{1}{n+1}t_i=\displaystyle \sum_{i=1}^{n}L^{2i}+ L^{2n+1}=\dfrac{L^2(1-(L^2)^{n+1})}{1-L^2}+L^{2n+1}$ and  $\som{1}{\infty}t_i =\dfrac{L^2}{1-L^2}$.
\end{itemize}
\noindent
\textbf{2.} If $\som{1}{\infty}t_i$ is the $\om$-series following the pattern of integers $\left\{2^{\ceil*{\log_2n}-1}\right\}_{n \in \mathbb{N}}$ (or the pattern of integers  $\left\{\ceil*{\frac{n}{2}}\right\}_{n \in \mathbb{N}}$), where $\ceil*{.}$ denotes the ceiling function, then for any $r \geq 1$, 
$$\som{1}{2^r}t_i = g^{(r)}\left(\displaystyle \sum_{j=1}^{2^r}t_j\right) =\displaystyle \sum_{j=1}^{2^r}  g^{(r)}(t_j).$$
}
\end{example}

\begin{example}\label{lemar}
\textup{
Let $\psi$ be a closed binary operation on the interval $[0,\infty)$ of non-negative real numbers. Let $u,v$ be non-negative real numbers, and let $\{t_n\}_{n \geq 1}$ be the sequence of real numbers such that
\begin{equation*}
t_n=\left\{
\begin{array}{lll}
\psi(u,v), & n=1
\\
v, & n \geq 2.
\end{array}
\right.
\end{equation*}
If $\spsi{1}{\infty} t_i$ is the infinite generalized series (or more precisely, the $\opsi$-series) following the pattern of integers $\{n-1\}_{n \in \mathbb{N}}$, then for $n \geq 2$,
\begin{equation*}
\spsi{1}{n}t_i=\psi\left(\spsi{1}{n-1}t_i, t_n\right)=\psi(\cdot,t_n)\left(\spsi{1}{n-1}t_i\right) = \psi(\cdot,v)\left(\spsi{1}{n-1}t_i\right) = \ldots = \psi^{(n)}(\cdot,v)(u),
\end{equation*}
where $\psi^{(n)}(\cdot,v)$ denotes the $n$-th iterate of the function $\psi(\cdot,v):[0,\infty) \to [0,\infty)$ defined for all $w \geq a$ by $\psi(\cdot,v)(w)=\psi(w,v)$. Therefore, $\spsi{1}{\infty}t_i=\displaystyle \lim_{n \to \infty}\psi^{(n)}(\cdot,v)(u)$. 
\\
Now, suppose that $\psi(\cdot,v)$ has a unique fixed point equal to $\displaystyle \lim_{n \to \infty}\psi^{(n)}(\cdot,v)(x_0)$ for any $x_0 \in [0,\infty)$ (for example, when $\psi(\cdot,v)$ is a $k$-contraction for some $k \in [0,1)$, following the Banach contraction principle). Then, 
\[\spsi{1}{\infty}t_i=\displaystyle \lim_{n \to \infty}\psi^{(n)}(\cdot,v)(u)=\lim_{n \to \infty}\psi^{(n)}(\cdot,v)(v)=\lim_{n \to \infty}\spsi{1}{n+1}v=\spsi{1}{\infty} v.\]
}
\end{example}

\section{Control Pairs and $\om$-Calculus compatible with the $\om$-Metric structure}

We introduce control pairs $(\varphi,\gamma)$ independently of any fixed point problem. The function $\varphi$ governs inequalities of the form
\begin{equation}\label{little}
u \le v \, \om \, \varphi(k,u),
\end{equation} 
while the function $\gamma$ prevents growth under the operation $\om$. The conditions introduced below ensure summability and compatibility with the O-metric structure.
\\

\noindent
In this section, and without prejudice to the earlier Assumption \ref{mainass} pertaining to binary operations associated to an O-metric, we consider the following:

\begin{assumption}\label{mainass1}
$a$ is a non-negative real number, $\om$ is a monotone (nondecreasing in both variables) closed binary operation  on $[a,\infty)$, and continuous at $(a,a)$, with $a \, \om \,  a=a$.
\end{assumption}


\begin{definition}\label{contraction2}
\textup{
Consider a (control) pair of functions $(\varphi,\gamma)$ as follows:
\begin{enumerate}
\item For the function $\varphi:[0,\infty) \times [a,\infty) \to [a,\infty)$, distinctive properties include: 
\begin{itemize}
\item[$(\varphi_0)$] $\varphi$ is continuous in the first variable at $0$.
\item[$(\varphi_1)$] $\varphi$ is continuous in the second variable at $a$, and increasing on both variables. 
\item[$(\varphi_2)$] $\varphi(0,u)=\varphi(r,a)=a$ for all $u \ge a$ and $r \ge 0$. 
\item[$(\varphi_3)$] $\varphi(r_1,\varphi(r_2,u)) =\varphi(r_1r_2,u)$ for all $r_1,r_2 \in [0,\infty)$ and $u \ge a$.
\item[$(\varphi_4)$]  $\varphi$ is distributive with respect to $\om$, i.e., 
$$\varphi(r, \,u \, \om \, v)=\varphi(r,u) \, \om \, \varphi(r,v)$$ for all $u,v \ge a$ and $r \ge 0$. 
\item[$(\varphi_5)$] For some $k \geq 0$,  the function $\psi(\cdot,v):[a,\infty) \to [a,\infty)$ defined for all $w \geq a$ by $$\psi(\cdot,v)(w)=\psi(w,v):=v \, \om \, \varphi(k,w)$$ has a unique fixed point $z=\displaystyle \lim_{n \to \infty} \psi(\cdot,v)^{(n)}(w)$ for any $w \geq a$. 
\item[$(\varphi_6)$]  For some $k \geq 0$, $\som{1}{\infty} \varphi(k^i,\cdot)$ following the pattern of integers $\{1\}_{n \in \mathbb{N}}$, is a well-defined function, continuous at $a$. 
\end{itemize}
\item For the function function  $\gamma:[a,\infty) \times [a,\infty) \to [a,\infty)$, properties include:
\begin{itemize}
\item[$(\gamma_1)$] $\gamma$ is continuous on both variables; 
\item[$(\gamma_2)$] $\max\{\gamma(u,v), \gamma(u \, \om \, v, a)\} \le \max\{u,v\}$ for all $u,v \ge a$. 
\end{itemize}
\end{enumerate}
}
\end{definition}
\noindent
It should be noted that conditions $(\varphi_0) - (\varphi_4)$ are relative to the continuity and initial conditions of the control function $\varphi$ at the boundaries $x=0$ and $y=a$, and its compatibility with the classical product via mixed associativity and distributivity. Conditions $(\varphi_5)$ and $(\varphi_6)$ ensure control of repeated $\om$-compositions of terms in $\varphi$ and $k$.

\begin{remark}\label{workeasy}
\textup{
One can easily check that if $\varphi$ satisfies conditions $(\varphi_1) - (\varphi_3)$, since $\om:[a,\infty) \times [a,\infty) \to [a,\infty)$ is nondecreasing in both variables, then 
\begin{equation}\label{acee}
\varphi(r,t) <\varphi(1,t) = t \quad \forall r \in [0,1) ~ \forall t \in [a,\infty).
\end{equation}
Therefore, 
\begin{equation}\label{extree}
\text{if $t \leq \varphi(r,t)$  with  $r \in [0,1)$ and $t \in [a,\infty)$, then $t=a$.}
\end{equation}
}
\end{remark}

\noindent
We present some examples to demonstrate the naturality and breadth of the framework.

\begin{example}\label{global} \textup{
Let $\om$ be a scalar augmented addition, i.e. $$\om(u,v):=s(u+v)$$ for all $u,v \geq 0$, for some $s \geq 1$, as in the case of b-metrics. The product function 
\begin{equation*}
\begin{array}{lclll}
 \varphi: & [0,\infty) \times [0,\infty) &\to& [0,\infty)
\\
&(r,u) & \mapsto & ru
\end{array}
\end{equation*}
satisfies conditions $(\varphi_0)-(\varphi_5)$  with $a =0$ and $C=s$. It also satisfies condition $(\varphi_6)$ for $k \in [0,1/s)$ since for any $v \geq 0$, the function $\psi(\cdot,v):[0,\infty) \to [0,\infty)$ defined for all $w \geq 0$ by $\psi(\cdot,v)(w)=\om(v,\varphi(k,w))=s(v+kw)$ is a $k$-contraction, with unique fixed point $\frac{sv}{1-ks} \in [0,\infty)$. We have that for $\epsilon \geq 0$ and $n \in \mathbb{N}$, following the pattern of integers $\{1\}_{n \in \mathbb{N}}$,
$$\som{1}{n+1} \varphi(k^i,\epsilon)=\som{1}{n+1} k^i\epsilon=\displaystyle \sum_{i=1}^n s^i k^i \epsilon +s^n k^{n+1} \epsilon =\left[ \sum_{i=1}^n (sk)^i +(sk)^nk \right]\epsilon,$$ 
hence $(\varphi_7)$ is satisfied for $k \in [0,\frac{1}{s})$ since $\som{1}{\infty} \varphi(k^i,\cdot)$ which is the function defined by $\som{1}{\infty} \varphi(k^i,\cdot)(\epsilon)=\dfrac{sk}{1-sk}\epsilon$, is continuous at $0$. 
\\
On the other hand, the function $$\gamma(u,v):=\frac{u+v}{2s} \text{ for all $u,v \geq 0$},$$ satisfies conditions $(\gamma_1)$ and $(\gamma_2)$, with $a=0$.
}
\end{example}
\begin{example}
\textup{
One can also check that, in the case of b-multiplicative metric spaces, where $\om$ is the $s$-powered product $$\om(u,v):=(uv)^s$$ with $s\geq1$ a constant, one can take $\varphi$ and $\gamma$ such that $$\varphi(r,u):=u^r$$ and $$\gamma(u,v):=(uv)^{1/2s}$$ for all $u,v \geq 1$ and $r \geq 0$. 
\\
Conditions $(\varphi_0)-(\varphi_6)$  and $(\gamma_1)-(\gamma_2)$ are all satisfied, for $k \in [0,\frac{1}{s})$.
}
\end{example}

\subsection{Separation Principle for $\om$-Inequalities}

A central difficulty in O-metric spaces arises from the fact that the binary operation $\om$ need not be associative and generally admits no algebraic inverse. Consequently, classical techniques based on subtraction or division cannot be applied directly to  inequalities of the type (\ref{little}).
\\

\noindent
The following theorem provides a mechanism for resolving such inequalities. In particular, it allows one to separate the variables appearing in expressions of the form (\ref{little}), thereby reducing them to controlled estimates governed by the associated $\om$-series. In this sense, the result plays a role analogous to some combined subtraction-division in classical additive settings.

\begin{theorem}\label{terrible}[Separation Principle]
Let $\varphi$ be the a control function satisfying the conditions  $(\varphi_i)$ for $1 \le i \le 5$ introduced above, for some $k \in (0,1)$. Suppose $u,v \ge a$ satisfy the inequality
\begin{equation}\label{hilal}
u \leq v \, \om \, \varphi(k,u).
\end{equation}
Then, the generalized series $\som{0}{\infty} \varphi(k^i, v)$ is composable following the pattern of integers $\{1\}_{n \in \mathbb{N}}$, and
\begin{equation}\label{hilal1}
u \leq  \som{0}{\infty} \varphi(k^i, v).
\end{equation}
\end{theorem}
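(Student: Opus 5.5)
Define $\psi(w,v):=v\,\om\,\varphi(k,w)$ as in $(\varphi_5)$. The plan is to show two things. First, the partial compositions of $\som{0}{\infty}\varphi(k^i,v)$ along the pattern $\{1\}$ are the iterates of $\psi(\cdot,v)$ started at $v$. Second, $u$ lies below every such iterate. Then $(\varphi_5)$ supplies the limit, and we pass to it.

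\emph{Step 1 (identification of the partial compositions).} With the pattern of integers $\{1\}_{n\in\mathbb N}$, Definition \ref{pat} gives the recursion
\[
\som{0}{n}\varphi(k^i,v)=\varphi(k^0,v)\,\om\,\som{1}{n}\varphi(k^i,v).
\]
Now write $\varphi(k^i,v)=\varphi(k,\varphi(k^{i-1},v))$ using $(\varphi_3)$, and pull $\varphi(k,\cdot)$ out of the inner composition using the distributivity $(\varphi_4)$ repeatedly. This gives $\som{1}{n}\varphi(k^i,v)=\varphi\big(k,\som{0}{n-1}\varphi(k^i,v)\big)$. Since $\varphi(1,v)=v$ by \eqref{acee}, we obtain
\[
\omega^n=v\,\om\,\varphi(k,\omega^{n-1})=\psi(\omega^{n-1},v),\qquad \omega^0=v .
\]
I would carry this out by induction on $n$, checking that the index shift is consistent with the convention $h_n\in\Omega_{n-1}$. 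It follows that $\omega^n=\psi(\cdot,v)^{(n)}(v)$. This is exactly the mechanism of Example \ref{lemar}.

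\emph{Step 2 (convergence).} By $(\varphi_5)$, the iterates $\psi(\cdot,v)^{(n)}(w)$ converge to the unique fixed point $z$ for every starting point $w\ge a$. Taking $w=v$ shows that $\{\omega^n\}$ converges to $z$. Hence the series is composable and $\som{0}{\infty}\varphi(k^i,v)=z$.

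\emph{Step 3 (comparison).} The map $\psi(\cdot,v)$ is nondecreasing, because $\varphi$ is increasing in its second variable by $(\varphi_1)$ and $\om$ is monotone. The hypothesis \eqref{hilal} says $u\le\psi(u,v)$. Iterating and using monotonicity gives $u\le\psi(\cdot,v)^{(n)}(u)$ for all $n$. By $(\varphi_5)$ with $w=u$, the right-hand side tends to the same fixed point $z$. Letting $n\to\infty$ yields $u\le z$, which is \eqref{hilal1}.

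\textbf{Main obstacle.} The delicate point is Step 1: getting the non-associative bookkeeping right. One must verify that the pattern $\{1\}$ really places $\varphi(k^0,v)$ as the outer left operand at each level, and that $(\varphi_4)$ may be applied through the nested composition. If the paper's indexing makes the pattern peel off the term from the right instead, I would reverse the recursion accordingly. The limit argument itself is routine once the iterates are identified.
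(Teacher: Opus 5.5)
Your proposal is correct and is essentially the paper's proof. The paper likewise shows $u\le\psi(\cdot,v)^{(n)}(u)$ by monotonicity, then uses $(\varphi_5)$ to replace the starting point $u$ by $v$ in the limit, as in Example~\ref{lemar}. It then identifies the iterates of $\psi(\cdot,v)$ from $v$ with the partial compositions along the pattern $\{1\}$, by an induction using $(\varphi_3)$, $(\varphi_4)$ and $\varphi(1,v)=v$. The only difference is that the paper does the comparison first; your right-nested reading of the pattern $\{1\}$ agrees with Definition~\ref{pat}.
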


\begin{proof}
(i) Define $\psi(u,v):=v \, \om \, \varphi(k,u)$, for all $u,v \in [a,\infty)$, for some $k \in (0,1)$.
Suppose $u,v$ are such that $u \leq v \, \om \, \varphi(k,u)$. By successive applications of the inequality $u \leq  v \,\om \, \varphi(k,u)=\psi(u,v)$, one obtains  $u \leq \spsi{1}{n} t_i$, where $t_1=\psi(u,v)$ and $t_i=v$ for $i \geq 2$, and where $\spsi{1}{n} t_i$ follows the pattern of integers $\{n-1\}_{n \in \mathbb{N}}$. Since $(\varphi_6)$ holds, we repeat the argument in Example \ref{lemar} to obtain as $n \to \infty$, the inequality 
\begin{equation}\label{refer}
u \leq \spsi{1}{\infty} t_i=\spsi{1}{\infty} v.
\end{equation}
(ii) We can prove by induction that $\varphi\left(k, \som{1}{n} \varphi(k^{i-1},v)\right)=\som{1}{n}\varphi(k^i,v)$ for all $v \in [a,\infty)$ and $n \in \mathbb{N}$, where the $\om$-series in the equality follow the pattern of integers $\{1\}_{n \in \mathbb{N}}$. Indeed, $\varphi\left(k, \som{1}{1} \varphi(k^{i-1},v)\right)=\varphi(k,\varphi(1,v))=\varphi(k,v)=\som{1}{1}\varphi(k^i,v)$ for all $v \in [a,\infty)$, and if one supposes that  $\varphi\left(k, \som{1}{m} \varphi(k^{i-1},v)\right)=\som{1}{m}\varphi(k^i,v)$ for all $v \in [a,\infty)$, for some $m \in \mathbb{N}$, then, for all $v \in [a,\infty)$,
\begin{equation*}
\begin{array}{lcl}
\varphi\left(k, \som{1}{m+1} \varphi(k^{i-1},v)\right) &=& \varphi\left(k, \, v \, \om  \left(\som{2}{m+1} \varphi(k^{i-1},v)\right)\right)
\\
&=& \varphi\left(k, \, v \, \om  \left(\som{1}{m} \varphi(k^{i},v)\right)\right)
\\
&=& \varphi\left(k, \, v \, \om  \left(\som{1}{m} \varphi(k^{i-1},\varphi(k,v))\right)\right)
\\
&=&  \varphi(k,v) \, \om \, \varphi\left(k, \som{1}{m} \varphi(k^{i-1},\varphi(k,v)) \right)  
\\
&=&  \varphi(k,v) \, \om \, \left( \som{1}{m}\varphi(k^i,\varphi(k,v))  \right)
\\
&=&  \varphi(k,v) \, \om \, \left( \som{1}{m}\varphi(k^{i+1},v)  \right)
\\
&=& \varphi(k,v) \, \om \left( \som{2}{m+1}\varphi(k^{i},v)  \right)
\\
&=&\som{1}{m+1}\varphi(k^{i},v).
\end{array}
\end{equation*}
(iii) Now, we prove by induction that $\spsi{1}{n} v= \som{1}{n} \varphi(k^{i-1},v)$ for all $v \in [a,\infty)$ and for all $n \in \mathbb{N}$, where the $\psi$-series at the left side of the inequality follows the pattern of integers $\{n-1\}_{n \in \mathbb{N}}$ while the $\om$-series at the right side follows the pattern of integers $\{1\}_{n \in \mathbb{N}}$. The equality can be easily verified to be true for $n=1$, and if one supposes that  $\spsi{1}{m} v= \som{1}{m} \varphi(k^{i-1},v)$ for all $v \in [a,\infty)$, for some $m \in \mathbb{N}$, then, for all $v \in [a,\infty)$,
\begin{equation*}
\begin{array}{llcccclll}
\spsi{1}{m+1} v &=& \psi\left( \spsi{1}{m} v, v   \right)   &=&  v \, \om\, \varphi\left(k, \spsi{1}{m} v \right) 
&=& v \, \om \, \varphi\left(k, \som{1}{m} \varphi(k^{i-1},v) \right) 
\\
&=& v \,  \om \, \left(\som{1}{m}\varphi(k^i,v) \right) &=& \varphi(1,v) \, \om \, \left(\som{1}{m}\varphi(k^i,v) \right)

&=& \som{1}{m+1} \varphi(k^{i-1},v).
\end{array}
\end{equation*}
Therefore, for $u,v \in [a,\infty)$ such that $u \leq  v \, \om \, \varphi(k,u)=\psi(u,v)$, inequality (\ref{refer}) becomes:
\begin{equation*}
u \leq \spsi{1}{\infty} v =\som{1}{\infty} \varphi(k^{i-1},v)=\som{0}{\infty} \varphi(k^{i},v).
\end{equation*}
\end{proof}
\noindent
The theorem provides a general mechanism for resolving $\om$-inequalities generated by non-associative compositions and will serve as the analytic foundation for the fixed point results established in the sequel.
\\

\begin{example}
As seen in Example \ref{global}, given a fixed real number $s \geq 1$, the operations $u \om v :=s(u+v)$ and $\varphi(k,u):=ku$ satisfy the conditions of Theorem \ref{terrible} for $k \in \left(0,\frac{1}{s}\right)$, and
$$\som{1}{\infty} \varphi(k^i,\epsilon)=\dfrac{s\epsilon}{1-sk}.$$
The inequality (\ref{hilal}) becomes
\begin{equation}\label{awel}
u \leq s(v + ku),
\end{equation}
for $u,v \geq 0$, which, when solved, yields
$$u \le \frac{sv}{1-sk}.$$
The inequality above is the same inequality as in  (\ref{hilal1}).
\end{example}

\noindent
The following consequence describes the asymptotic behaviour of quantities governed by $\om$-compositions in the presence of vanishing perturbations. In particular, it shows that when the perturbation sequence $(q_n)$ approaches the minimal level 
$a$, any compatible limiting behaviour of $(p_n)$ must collapse to $a$.

\begin{corollary}\label{Ibe}
Suppose $\om$ is a closed binary operation on $[a,\infty)$. Let $(p_n)$ and $(q_n)$ be sequences of real numbers not less than $a$ such that
\begin{equation}
\label{molomolo11}
\left\{
\begin{array}{lll}
p_n \to p, 
\\
q_n \to a,
\\
p \, \le q_n \, \om \, p_n,
\\
p_{n+1} \le \varphi\left(k,\max\{p,p_n\}\right)
\end{array}
\right.
\end{equation}
where $\varphi$ is a function satisfying conditions $(\varphi_i)$ for all $i$.
Then, $p=a$.
\end{corollary}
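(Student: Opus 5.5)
Since the inequality $p \le q_n \, \om \, p_n$ holds for every $n$, I will first combine it with the last condition of (\ref{molomolo11}) applied at the previous index. For $n \ge 1$, monotonicity of $\om$ (Assumption \ref{mainass1}) together with $p_{n} \le \varphi(k,\max\{p,p_{n-1}\})$ gives
\[
p \le q_n \, \om \, \varphi\left(k,\max\{p,p_{n-1}\}\right).
\]
Next I pass to the limit $n \to \infty$. Because $p_{n-1} \to p$, we have $\max\{p,p_{n-1}\} \to p$, and $q_n \to a$. To obtain $p \le a \, \om \, \varphi(k,p)$, I need $\om$ to be continuous at $(a,\varphi(k,p))$, which is $(U_1)$ of Assumption \ref{mainass}, and $\varphi(k,\cdot)$ to be continuous at $p$. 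Continuity of $\varphi$ in the second variable is only assumed at $a$. To avoid needing it at $p$, I will not pass to the limit inside $\varphi$. Instead I fix $\epsilon>0$ and take $n$ large so that $\max\{p,p_{n-1}\} \le p+\epsilon$. Monotonicity of $\varphi$ in its second variable, from $(\varphi_1)$, then gives $p \le q_n \, \om \, \varphi(k,p+\epsilon)$. Letting $n\to\infty$ with $(U_1)$ yields $p \le a \, \om \, \varphi(k,p+\epsilon)$.

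I then apply the Separation Principle, Theorem \ref{terrible}. The inequality $p \le a\,\om\,\varphi(k,p)$ has the form (\ref{hilal}) with $u=p$ and $v=a$, so Theorem \ref{terrible} gives $p \le \som{0}{\infty}\varphi(k^i,a)$. By $(\varphi_2)$ every term equals $a$, and since $a\,\om\,a=a$ every partial composition equals $a$. Hence $p \le a$, and since $p\ge a$, we get $p=a$. Alternatively, without the theorem, iterating gives $p \le \psi(\cdot,a)^{(n)}(p)$, and $(\varphi_5)$ says these iterates converge to the unique fixed point of $w\mapsto a\,\om\,\varphi(k,w)$. That fixed point is $a$, since $a\,\om\,\varphi(k,a)=a\,\om\,a=a$. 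This again gives $p\le a$.

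The main obstacle is the $\epsilon$: I only have $p \le a\,\om\,\varphi(k,p+\epsilon)$, not the inequality with $\varphi(k,p)$. I plan to handle it in one of two ways. The first is to run the iteration with $\epsilon$ in place, $p\le a\,\om\,\varphi(k,p+\epsilon)$, using the monotone map $w\mapsto a\,\om\,\varphi(k,w+\epsilon)$, and then let $\epsilon\to0$ via the fixed-point argument of $(\varphi_5)$. The second, which I would try first, is to note that $\varphi(k,\cdot)$ is increasing. Its one-sided right limit at $p$ therefore exists, and combining $(\varphi_3)$ with Remark \ref{workeasy} (that is, $\varphi(k,t)<t$) should keep that limit controlled, giving $p\le a\,\om\,\varphi(k,p^+)$ with $\varphi(k,p^+)\le\varphi(\sqrt{k},p)$. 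Then Theorem \ref{terrible} applies with $\sqrt{k}$ in place of $k$, and the conclusion $p=a$ follows as before. I expect this continuity bookkeeping to be the only delicate step. If $\varphi$ is continuous in its second variable, as in all the paper's examples, the limit passage is immediate.
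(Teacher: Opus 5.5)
Your final step is sound. Once you have $p\le a\,\om\,\varphi(k,p)$ for the same $k$ at which $(\varphi_5)$ holds, Theorem \ref{terrible} with $v=a$ (or iterating $w\mapsto a\,\om\,\varphi(k,w)$) gives $p\le a$. The gap is the step you flag yourself, passing the limit inside $\varphi(k,\cdot)$, and neither of your remedies closes it.

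\emph{The $\sqrt{k}$ remedy fails.} Theorem \ref{terrible} rests on $(\varphi_5)$, which is assumed only for the given $k$, and at a larger parameter its conclusion can be false. Take $a=0$, $u\,\om\,v=2(u+v)$, $\varphi(r,u)=ru$ and $k=0.4$. Every $(\varphi_i)$ holds for this $k$ (Example \ref{global}), yet $p\le 0\,\om\,\varphi(\sqrt{k},p)=2\sqrt{0.4}\,p\approx 1.26\,p$ holds for every $p\ge 0$. So the conclusion $p\le a$ is false at $\sqrt{k}$. Your bound $\varphi(k,p^+)\le\varphi(\sqrt{k},p)$ is also only asserted; it is true, but only for the reason given below.

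\emph{The $\epsilon$ remedy is unsupported.} $(\varphi_5)$ concerns only the maps $w\mapsto v\,\om\,\varphi(k,w)$, and says nothing about the iterates of $w\mapsto a\,\om\,\varphi(k,w+\epsilon)$. Moreover, for a monotone map $g$ whose unique fixed point $a$ attracts every orbit, $p\le g(p+\epsilon)$ for all $\epsilon>0$ does not force $p=a$: $g$ may jump up to the value $p$ just to the right of $p$.

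\emph{The $(U_1)$ issue.} Letting $q_n\to a$ uses continuity of $\om$ at $(a,\varphi(k,p+\epsilon))$, i.e.\ $(U_1)$. That is a standing assumption for O-metric operations, but not obviously for the abstract operation in this corollary. Assumption \ref{mainass1} gives continuity only at $(a,a)$.

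The missing observation is that nothing here is delicate. Fix $k>0$. By $(\varphi_3)$ and $\varphi(1,t)=t$ from (\ref{acee}), $\varphi(k,\cdot)$ is a bijection of $[a,\infty)$ onto itself with inverse $\varphi(k^{-1},\cdot)$. Being monotone, it is a strictly increasing surjection of an interval onto itself, hence continuous. Now let $n\to\infty$ in the last hypothesis alone: this gives $p\le\varphi(k,p)$, and (\ref{extree}) yields $p=a$. The sequence $(q_n)$, the inequality $p\le q_n\,\om\,p_n$, $(U_1)$ and Theorem \ref{terrible} are all unnecessary. With this continuity your own route also closes at the original $k$, provided you accept $(U_1)$.

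The paper, by contrast, never takes a limit inside $\varphi(k,\cdot)$. It splits into two cases, and in each shows $p_n\to a$:
\begin{itemize}
\item if $p_n\ge p$ eventually, it iterates $p_{n+1}\le\varphi(k,p_n)$ using $(\varphi_3)$ and $(\varphi_0)$;
\item if $p_m<p$ for some $m$, it applies Theorem \ref{terrible} with $v=\varphi(k,q_n)$, together with $(\varphi_4)$ and $(\varphi_6)$.
\end{itemize}
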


\begin{proof}
We prove by cases.

\noindent
Case 1: Suppose there is $m \geq n_0$ such that $p_m<p$. 
\\
In this case, $p_{m+1} \leq \varphi(k,p)$, hence, for any $n > m$, 
\begin{equation}\label{intermed1}
p_n \leq \varphi(k,p).
\end{equation}
In fact, for $n>m$,
$$p_n \leq \varphi(k, \, q_n\, \om \, p_n) = \varphi(k,q_n)) \, \om \, \varphi(k,p_n),
$$
hence from Theorem \ref{terrible}, 
$$
p_n \leq \som{0}{\infty}\varphi(k^i,\varphi(k,q_n)) = \som{0}{\infty}\varphi(k^{i+1},q_n)
= \som{1}{\infty}\varphi(k^{i},q_n),$$
where $\som{1}{\infty}\varphi(k^{i},q_n)$ follows the pattern $\{1\}_{n \in \mathbb{N}}$ of integers.
As $n \to \infty$, we have by $(\varphi_5)$ that $\som{1}{\infty}\varphi(k^{i},q_n) \to \som{1}{\infty}\varphi(k^{i},a)= a$; thus  $p_n \to a$.
\\
Case 2: Assume now that $p \leq p_n$ for all $n \geq n_0$, then from (\ref{molomolo11}), 
\begin{equation}\label{encore1}
p_{n+1} \leq \varphi(k, p_n),
\end{equation}
which, combined with $(\varphi_3)$ gives
\begin{equation}
p_n \le \varphi(k^{n-n_0},p_{n_0}).
\end{equation}
As $n \to \infty$, $\varphi(k^{n-n_0}, p_{n_0}) \to a$ hence $p_n \to a$.
\end{proof}

\subsection{Interval of Cauchyness of contractive sequences}

By virtue of Remark \ref{workeasy}, it turns out that the control function $\varphi$ is a contracting function. Therefore, the following makes sense in the context of O-metric spaces satisfying Assumption \ref{mainass}: 

\begin{definition}
Let $(X,d_\om,a)$ be an $\om$-metric space. 
Let $\varphi:[0,\infty) \times [a,\infty) \to [a,\infty)$ be a function satisfying properties $(\varphi_1)$ -- $(\varphi_3)$.
 Given a real number $k \in [0,1)$, a sequence  $\{x_n\}$ of points of $X$ such that 
\[
d_{\om}(x_{n}, x_{n+1}) \leq \varphi(k, d_\om(x_{n-1}, x_{n})) \text{ for all }n \in \mathbb{N}
\]
is called a $(k,\varphi)$-contracting sequence.
\end{definition}

\noindent
The following lemma provides a bridge between the convergence properties of the generalized $\om$-series and the behaviour of iterative sequences in O-metric spaces: it links the range $\mathcal{C}_\varphi$ of convergence to base level $a$ of an O-series $\som{n \wedge m}{n\vee m} \varphi(r^i,\epsilon)$ (following some pattern of compositions) generated by a control function $\varphi$  to the threshold that guarantees the Cauchy property of contracting sequences.

\begin{lemma}\label{FrmOP}[see \cite{ourpaper2}]
Let $\om$ be a binary operation satisfying Assumption \ref{mainass1}.  
Let $\varphi:[0,\infty) \times [a,\infty) \to [a,\infty)$ be a function satisfying $(\varphi_1) - (\varphi_3)$. Define the set $\mathcal{C}_\varphi$  by
\begin{equation}\label{conver}
\mathcal{C}_\varphi=\left\{
r \geq 0 : ~~ \forall \epsilon \geq a ~~   \lim_{n,m \to \infty} \som{n \wedge m}{n\vee m} \varphi(r^i,\epsilon) = a 
\right\},
\end{equation}
where $n \wedge m:=\min\{n,m\}$ and $n \vee m:=\max\{n,m\}$.
\\
The following hold:
\begin{enumerate}
\item $\mathcal{C}_\varphi$ is an interval, and $0 \in \mathcal{C}_\varphi \subset [0,1)$. 
\item\label{go} Any $k$-contracting sequence $\{x_n\}$ in an O-metric space $(X,d_\om,a)$ satisfying Assumption \ref{mainass},
 for which $k<\kappa$, where $\kappa:=\sup \mathcal{C}_\varphi$, is a  Cauchy sequence.
\end{enumerate}
\end{lemma}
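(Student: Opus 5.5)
For part 1, I would first show $0 \in \mathcal{C}_\varphi$. By $(\varphi_2)$ and the convention $0^0=1$, every term $\varphi(0^i,\epsilon)$ with $i\ge 1$ equals $a$. For $n\wedge m \ge 1$ the composition $\som{n \wedge m}{n\vee m} \varphi(0^i,\epsilon)$ is then an $\om$-composition of copies of $a$. Since $a \, \om \, a = a$, an induction on the length of the composition shows it equals $a$ whatever the pattern. Hence the limit is $a$.

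Next I would show $\mathcal{C}_\varphi$ is an interval by proving it is downward closed. Let $r\in\mathcal{C}_\varphi$ and $0\le r'\le r$. By $(\varphi_1)$, $\varphi$ is increasing in the first variable, so $\varphi(r'^i,\epsilon)\le \varphi(r^i,\epsilon)$ termwise. Because $\om$ is nondecreasing in both variables (Assumption \ref{mainass1}), an induction along the pattern of the composition gives
\[
a \le \som{n \wedge m}{n\vee m} \varphi(r'^i,\epsilon)\le \som{n \wedge m}{n\vee m} \varphi(r^i,\epsilon).
\]
The squeeze then gives $r'\in\mathcal{C}_\varphi$.

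To get $\mathcal{C}_\varphi\subset[0,1)$, take $r\ge 1$, $\epsilon>a$, and $m=n+1$. Monotonicity gives $u \, \om \, v\ge u \, \om \, a$ and $u \, \om \, v\ge a \, \om \, v$. So the two-term composition dominates $\varphi(r^n,\epsilon)\, \om \, a$. By $(\varphi_3)$ and $(\varphi_1)$, $\varphi(r^n,\epsilon)\ge\varphi(1,\epsilon)=\epsilon$, using Remark \ref{workeasy} for $\varphi(1,t)=t$. If this quantity tended to $a$, then $\epsilon \, \om \, a$ would be forced to equal $a$. Under $(U_2)$, as in Assumption \ref{mainass}, this forces $\epsilon=a$, a contradiction. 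I expect this exclusion of $r\ge1$ to be the delicate point: it relies on the nondegeneracy $u \, \om \, a = a\Leftrightarrow u=a$. If only the left-sided version of $(U_2)$ is available, I would use $a \, \om \, \varphi(r^{n+1},\epsilon)$ instead.

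For part 2, take a $k$-contracting sequence with $k<\kappa$. Choose $r\in\mathcal{C}_\varphi$ with $k< r$ (or $k=0$, handled by $0\in\mathcal{C}_\varphi$); by part 1, $k\in\mathcal{C}_\varphi$. Set $\epsilon=d_\om(x_0,x_1)$. Iterating the contraction with $(\varphi_3)$ and the monotonicity of $\varphi$ in its second variable gives $d_\om(x_i,x_{i+1})\le\varphi(k^i,\epsilon)$. For $p=n\wedge m<q=n\vee m$, I would apply the polygon inequality (\ref{comp_1}) with the same pattern $h$ that appears in the definition of $\mathcal{C}_\varphi$, and use monotonicity of $\om$ again:
\[
a\le d_\om(x_p,x_q)\le h\bigl(d_\om(x_p,x_{p+1}),\ldots,d_\om(x_{q-1},x_q)\bigr)\le \som{p}{q-1}\varphi(k^i,\epsilon).
\]
The lower bound $a$ comes from $d_\om$ being $a$-upward. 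The right-hand side tends to $a$ because $k\in\mathcal{C}_\varphi$, so $d_\om(x_n,x_m)\to a$ and the sequence is Cauchy. The index shift from $q$ to $q-1$ is harmless, since both indices tend to infinity.
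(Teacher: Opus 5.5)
Apart from one step, your proposal follows the paper's proof. The matching steps are: $0\in\mathcal{C}_\varphi$ via $(\varphi_2)$ and $a\,\om\,a=a$; downward closedness via termwise monotonicity of $\varphi$ in its first variable and of $\om$; and Part 2 via $d_\om(x_i,x_{i+1})\le\varphi(k^i,\epsilon)$, the polygon inequality with the pattern witnessing $k\in\mathcal{C}_\varphi$, and $k\in\mathcal{C}_\varphi$ because $k<\kappa$.

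The step that does not go through under the stated hypotheses is your exclusion of $r\ge 1$. Part 1 assumes only Assumption \ref{mainass1} on $\om$. Condition $(U_2)$ belongs to Assumption \ref{mainass}, which the lemma invokes only in Part 2, for the O-metric space. Without $(U_2)$ your choice $m=n+1$ cannot work. The constant operation $u\,\om\,v=a$ satisfies Assumption \ref{mainass1}, yet every composition of two or more terms equals $a$. So $\epsilon\,\om\,a=a$ gives no contradiction, and every off-diagonal composition tends to $a$ for every $r$.

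The missing observation is that the double limit defining $\mathcal{C}_\varphi$ includes the diagonal $n=m$. There $\som{n}{n}\varphi(r^i,\epsilon)$ is the single term $\varphi(r^n,\epsilon)\ge\varphi(1,\epsilon)=\epsilon>a$, which cannot tend to $a$. This is exactly the paper's argument: it is done for $r=1$ and combined with downward closedness, and it needs no nondegeneracy of $\om$ at all.

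Your own Part 2 already relies on the diagonal. When $q=p+1$, the composition $\som{p}{q-1}\varphi(k^i,\epsilon)$ is the single term $\varphi(k^p,\epsilon)$. So the ``delicate point'' you anticipated disappears once you take $m=n$ instead of $m=n+1$.
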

\begin{proof}
If $r=0$, then for $n,m \in \mathbb{N}$, $\varphi(r^i,\epsilon)=\varphi(0,\epsilon)=a$ for all $\epsilon \geq a$ and $i \in \mathbb{N}$ such that $n \wedge m \leq i \leq n \vee m$, hence $\som{n \wedge m}{n \vee m} \varphi(r^i,\epsilon)=a$ no matter how the $\varphi(r^i,\epsilon)$ are composed. Therefore $0 \in C_\varphi$.
\\
Suppose $r \in C_\varphi$ and $s \in [0,r]$. Since $\varphi$ is nondecreasing in the first variable, $\varphi(s^i,\epsilon) \leq \varphi(r^i,\epsilon)$ for $i \in \mathbb{N}$ such that $n \wedge m \leq i \leq n \vee m$, given $n,m \in \mathbb{N}$. As $\om$ is nondecreasing in both variables, a map $(t_1,t_2,\ldots,t_j) \mapsto \som{1}{j}t_i$ is nondecreasing in all variables, so $\som{n\wedge m}{n \vee m}\varphi(s^i,\epsilon) \leq \som{n \wedge m}{n \vee m} \varphi(r^i,\epsilon)$ for any given pattern of compositions.  Thus $s \in C_\varphi$ and $C_\varphi$ is an interval.
\\
If $r=1$, then for $n \in \mathbb{N}$ and $\epsilon>a$, $\som{n}{n} \varphi(r^i,\epsilon)=\varphi(1, \epsilon) >a$. Thus $1 \notin C_\varphi$. 
\\
\\
Now, let $\{x_n\}$ be a sequence such that 
$d_{\om}(x_{n}, x_{n+1}) \leq \varphi(k, d_\om(x_{n-1}, x_{n}))$ for all $n \in \mathbb{N}$, where $k \in [0, \kappa)$ and $\kappa:=\sup \mathcal{C}_\varphi$.
From the triangle $\om$-inequality,  given $n,m \in \mathbb{N}$ such that $m >n$, $$d_\om(x_n,x_{m}) \leq  \som{n}{m -1}d_\om(x_i,x_{i+1})$$ no matter the way of composition. Since maps $(t_1,t_2,\ldots,t_j) \mapsto \som{1}{j}t_i$ are nondecreasing in all variables, and since $d_\om(x_n,x_{n+1}) \leq \varphi(k^n,d_\om(x_0,x_1))$ holds for all $n \in \mathbb{N}$, we have that $d_\om(x_n,x_{m}) \leq  \som{n}{m-1} \varphi(k^i,d_\om(x_0,x_1))$ for all $n, m \in \mathbb{N}$, with $m>n$. As $k \in [0,\kappa)$, $k \in \mathcal{C}_\varphi$ hence  $\displaystyle \lim_{n,m \to \infty} d_\om(x_n,x_{m})= \lim_{n,m \to \infty} \som{n \wedge m}{n \vee m} \varphi(r^i,d_\om(x_0,x_1)) = a$. Therefore  $\{x_n\}$ is a Cauchy sequence. 
\end{proof}
\noindent
From the lemma, we may view the set $\mathcal{C}_\varphi$ as the interval of ``Cauchyness" of $(k,\varphi)$-contracting sequences. 
\\
\\
It should also be emphasized that $r \in C_\varphi$ whenever  $\displaystyle  \lim_{n,m \to \infty} \som{n \wedge m}{n \vee m} \varphi(r^i,\epsilon) = a$ for all $\epsilon \geq a$, for at least one way (not necessarily every way) of composition of the  $\varphi(r^i,\epsilon)$.
For specific patterns, we define the set 
\begin{equation}\label{omg}
\mathcal{C}_\varphi^{\{\alpha_n\}}=\left\{ r \geq 0 : ~~
\begin{array}{lll}
\forall \epsilon \geq a ~~   \displaystyle \lim_{n,m \to \infty} \som{n \wedge m}{n\vee m} \varphi(r^i,\epsilon) = a, \mbox{ where }
\\
\mbox{ $\som{n \wedge m}{n \vee m} \varphi(r^i,\epsilon)$ follows the pattern of integers $\{\alpha_n\}$} 
\end{array}
\right\}.
\end{equation}
It is obvious that for any pattern $\{\alpha_n\}$ of integers, $\mathcal{C}_\varphi^{\{\alpha_n\}}$ is an interval, and 
$$0 \in \mathcal{C}_\varphi^{\{\alpha_n\}} \subset \mathcal{C}_\varphi, \quad  \mathcal{C}_\varphi =\bigcup \left\{ \mathcal{C}_\varphi^{\{\alpha_n\}}: \, \, \{\alpha_n\} \text{ is a pattern of integers.} \right\}.$$
\begin{example}\label{hmm}
\textup{
As seen in equality (\ref{doyouknow}) in Example \ref{doyouknowex}, if we take $\om(u,v)=L(u+v)$ where $L>0$ (which means $a=0$), then following the pattern of integers $\{1\}_{n \in \mathbb{N}}$, $\som{1}{n+1}t_i=\displaystyle \sum_{i=1}^{n}L^{i} t_i+ L^{n}t_{n+1}$, where $\{t_n\}$ is a sequence of nonnegative numbers. Therefore, for $n,j \geq 1$, $\som{n}{n+j}t_i=\displaystyle \sum_{i=1}^{j}L^{i} t_{i+n-1}+ L^{j}t_{n+j}$. Now, let $\varphi(r,u)=ru$, for $r,u \geq 0$. We have that:
\begin{equation*}
\begin{array}{lcl}
\som{n}{n+j} \varphi(r^i,\epsilon) &=& \som{n}{n+j} r^i \epsilon = \displaystyle \sum_{i=1}^{j} L^{i}r^{i+n-1}\epsilon +L^{j}r^{n+j}\epsilon
\\
&=& r^n \epsilon \left[\displaystyle \sum_{i=1}^{j} L^{i}r^{i-1} +L^{j}r^{j}  \right]
\\
&=& r^n \epsilon \left[\displaystyle L\sum_{i=1}^{j} (Lr)^{i-1} +(Lr)^{j}  \right] \to 0
\end{array}
\end{equation*}
as $n,j \to \infty$, provided $r<\min\{1,L^{-1}\}$. 
\\
Obviously, $\som{n}{n} \varphi(r^i,\epsilon)=r^n\epsilon \to 0$ as $n \to \infty$, provided $r<1$.   Hence
$\mathcal{C}_\varphi^{\{1\}} \supset \left[0,\min\left\{\frac{1}{L},1\right\}\right)$.
\\
Now, let $r=\frac{1}{L}$.  For $n,j \geq 1$, $\som{n}{n+j} \varphi(r^i,\epsilon) = r^n \epsilon \left[Lj +1 \right]$. 
In particular, 
$\som{n}{n+\ceil{L+1}^n} \varphi(r^i,\epsilon) = \left(\frac{\ceil{L+1}}{L}\right)^n\epsilon L+\frac{\epsilon}{L^n}$, which diverges to $\infty$. Thus $\frac{1}{L} \notin \mathcal{C}_\varphi^{\{1\}}$.
\\
If $r=1$ and $\epsilon>0$, then $\som{n}{n} \varphi(r^i,\epsilon) =\epsilon \nrightarrow 0$, hence $1 \notin \mathcal{C}_\varphi^{\{1\}}$.
\\
Therefore $\min\left\{\frac{1}{L},1\right\} \notin \mathcal{C}_\varphi^{\{1\}}$, and so $\mathcal{C}_\varphi^{\{1\}} = \left[0,\min\left\{\frac{1}{L},1\right\}\right)$.
\\
As established in \cite{ourpaper2}, $C_\varphi=[0,1)$ hence $\mathcal{C}_\varphi^{\{1\}} \subsetneq C_\varphi$ if $L>1$.
}
\end{example}
\noindent

\noindent
In the next section, we apply the tools developed in the current section to recursive inequalities generated by contractive mappings. By combining the separation principle with the convergence properties of generalized $\om$-series, one obtains criteria ensuring that the orbits of such mappings form Cauchy sequences, as well as conditions under which their limits (which would exist when the O-metric space is O-complete) are fixed points. 

\section{Application to Fixed Points}

We begin by stating as a direct consequence of Corollary \ref{Ibe} above, a lemma which specifies recursive relations on self-mappings on an O-metric space that ensure that the limit of a convergent Picard sequence is a fixed point.

\begin{lemma}\label{Godwillmake}
Let $T: X \to X$ be a mapping defined on an O-metric space $(X,d_\om,a)$, and let $x \in X$ be a given point. Consider a map $\varphi:[0,\infty) \times [a,\infty) \to [a,\infty)$ and a real constant $k \in (0,1)$ satisfying conditions $(\varphi_i)$ for all $i$, and suppose that the sequence $\{T^nx\}$ converges to some $u \in X$.
\begin{enumerate}
\item If there is some $n_0 \in \mathbb{N}$ such that 
\begin{equation}\label{molomolo1}
d(T^{n+1}x,Tu) \leq \varphi(k,\max\{ d_\om(u,Tu),d_\om(T^nx,Tu)\})
\end{equation}
for all $n \geq n_0$, then $u$ is a fixed point of $T$. 

\item If, for all $n \in \mathbb{N}$, we have that
\begin{equation}\label{terre}
d(T^{n+1}x,Tu) \le \varphi(k, \max \mathcal{R}_{T^nx,u}),
\end{equation}
where
$$\mathcal{R}_{T^nx,u} =\{d(T^nx,u), d(T^nx,T^{n+1}x),d(u,Tu),d(u,T^{n+1}x),d(T^nx,Tu)\},$$
then $u$ is a fixed point of $T$.

\item If $T$ is sequentially continuous, then $u$ is a fixed point of $T$.
\end{enumerate}
\end{lemma}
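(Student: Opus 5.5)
The plan is to reduce each part to Corollary \ref{Ibe}, using the sequences $p_n:=d_\om(T^{n}x,Tu)$, $p:=d_\om(u,Tu)$ and $q_n:=d_\om(u,T^nx)$ (shifted by one index where convenient). Under Assumption \ref{mainass}, $T^nx\to u$ gives $q_n\to a$ and $d_\om(T^nx,T^{n+1}x)\to a$, since convergent sequences are Cauchy. The triangle $\om$-inequality yields
\[
p=d_\om(u,Tu)\le d_\om(u,T^{n}x)\,\om\, d_\om(T^{n}x,Tu)=q_n\,\om\,p_n ,
\]
which is the third condition in (\ref{molomolo11}).

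\textbf{Part 1.} Hypothesis (\ref{molomolo1}) reads $p_{n+1}\le\varphi(k,\max\{p,p_n\})$ for $n\ge n_0$, which is the fourth condition of (\ref{molomolo11}); an index shift handles $n_0$. The remaining hypothesis is $p_n\to p$. I would get it by squeezing. In one direction, $p_n\le d_\om(T^nx,u)\,\om\,p = q_n\,\om\,p$. In the other, $p\le q_n\,\om\,p_n$. Continuity of $\om$ at points with a coordinate equal to $a$ (condition $(U_1)$) together with monotonicity should give $\limsup p_n\le a\,\om\,p$. I expect $a\,\om\,p=p$ to be needed, which may not be available. If it is not, I would argue along subsequences instead: $p_n$ is bounded by the upper estimate, so I pass to a convergent subsequence with limit $p'$. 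The upper and lower estimates then relate $p'$ and $p$, and I would rerun the proof of Corollary \ref{Ibe} on that subsequence. Corollary \ref{Ibe} then gives $p=a$, so $Tu=u$ by $(O_1)$.

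\textbf{Part 2.} I would show that for large $n$ the maximum of $\mathcal{R}_{T^nx,u}$ is dominated by $\max\{p,p_n,\eta_n\}$, where $\eta_n:=\max\{q_n,\,d_\om(T^nx,T^{n+1}x),\,q_{n+1}\}\to a$. There are two cases.
\begin{itemize}
\item If $\max\mathcal{R}_{T^nx,u}\in\{p,p_n\}$ for all large $n$, then (\ref{molomolo1}) holds and Part 1 applies.
\item Otherwise, along a subsequence $p_{n+1}\le\varphi(k,\eta_n)\to a$ by $(\varphi_1)$ and $(\varphi_2)$. Then $p\le q_{n+1}\,\om\,p_{n+1}\to a\,\om\,a=a$ by continuity at $(a,a)$. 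Since $d_\om$ is $a$-upward, $p=a$.
\end{itemize}
A mixed situation, where the maximum is eventually attained by the small terms only along a subsequence, is handled by the second case. This is because that case only uses a subsequence.

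\textbf{Part 3.} Sequential continuity gives $T^{n+1}x\to Tu$. The sequence $T^{n+1}x$ also converges to $u$, and limits are unique under Assumption \ref{mainass}, so $Tu=u$.

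The main obstacle is verifying $p_n\to p$ in Part 1 without associativity or a neutral element. If the squeeze fails, I would bypass this hypothesis by redoing the case analysis of Corollary \ref{Ibe} directly. Its Case 2 only uses $p\le p_n$ and yields $p_n\to a$. Combined with $p\le q_n\,\om\,p_n\to a$, this gives $p=a$. Its Case 1 applies Theorem \ref{terrible} to $p_n\le\varphi(k,q_n)\,\om\,\varphi(k,p_n)$, which again gives $p_n\to a$ and hence $p=a$.
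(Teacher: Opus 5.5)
Your proposal is correct and follows essentially the paper's route. Part 1 goes through Corollary~\ref{Ibe} with the same choices $p_n=d_\om(T^nx,Tu)$, $q_n=d_\om(u,T^nx)$, $p=d_\om(u,Tu)$; Part 2 reduces to (\ref{molomolo1}) because, if $d_\om(u,Tu)>a$, the terms of $\mathcal{R}_{T^nx,u}$ tending to $a$ are eventually dominated by $d_\om(u,Tu)$; and Part 3 follows from uniqueness of O-limits.

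Your worry about the hypothesis $p_n\to p$ is well founded, since the paper's proof never verifies it and the squeeze only yields $\limsup_n p_n\le a\,\om\,p$. Your final bypass is the right fix: both cases of the proof of Corollary~\ref{Ibe} give $p_n\to a$ without that hypothesis, whence $a\le p\le q_n\,\om\,p_n\to a$. This is exactly what the paper tacitly relies on when it writes $d_\om(T^nx,Tu)\to a$. The subsequence detour can be dropped, since the recursion $p_{n+1}\le\varphi(k,\max\{p,p_n\})$ does not survive passing to a subsequence.
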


\begin{proof}
For the first statement, Corollary \ref{Ibe} applies easily, by taking $p_n=d_\om(T^n x, Tu)$ and $q_n=d(u,T^nx)$ for all $n$, and $p=d_\om(u,Tu)$. Since $d_\om(T^n x, Tu) \to a$, it follows that $u=Tu$, so $u$ is a fixed point of $T$.
\\
For the second statement, suppose $u \neq Tu$. In such case, $d(u,Tu)>a$ hence there is $n_0 \in \mathbb{N}$ such that
$$\max\{d_\om(x_n,x_{n+1}), d_\om(x_n,u), d_\om(x_{n+1},u)\} <d_\om(u,Tu)$$
for all $n \geq n_0$. Therefore, from inequality (\ref{terre}),
$$d(T^{n+1}x,Tu) \le \varphi(k, \max\{d(u,Tu),d(T^nx,Tu)\}$$
holds for all $n \ge n_0$. From the first statement of the lemma, $u$ is a fixed point of $T$, which contradicts the assumption
that $u \neq Tu$. Therefore, $u=Tu$ so, $u$ is indeed a fixed point of $T$.
\\
If $T$ is sequentially continuous, then $T^{n+1}x=TT^nx \to Tu$ as $n \to \infty$, hence $Tu=u$.
\end{proof}

\noindent
We can now state our first theorem which is somewhat an extension, in the context of O-metrics, of  Ciric's (in \cite{Ciric}).

\begin{theorem}\label{ZZthm}
Let $(X,d_\om,a)$ be an O-complete O-metric space satisfying Assumption \ref{mainass}. Let $T:X \to X$ be a map such that for all $x,y \in X$,
\begin{equation}\label{help2}
d_\om(Tx,Ty) \leq \varphi(k, \max\{d_\om(x,y), d_\om(x,Tx), d_\om(y,Ty), \gamma(d_\om(x,Ty), d_\om(y,Tx))\}),
\end{equation}
where $\varphi$ and $\gamma$ satisfy conditions $(\varphi_1) - (\varphi_3)$, and $(\gamma_1) - (\gamma_2)$ respectively, and $k<\kappa$, with $\kappa:=\sup C_\varphi$.
Then $T$ has a unique fixed point if any of the following conditions below holds:
\begin{itemize}
\item[(i)] $d_\om$ is sequentially continuous in the first variable;
\item[(ii)] $T$ is sequentially continuous;
\item[(iii)] $\varphi$ also satisfies condition $(\varphi_0)$ and $(\varphi_4)-(\varphi_6)$ for $k$.
\end{itemize}
\end{theorem}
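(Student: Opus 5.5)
We follow the classical Ćirić scheme. The Picard orbit is first shown to be a $(k,\varphi)$-contracting sequence. Lemma \ref{FrmOP} then makes it Cauchy, so it converges, and Lemma \ref{Godwillmake} (or a direct argument for (i)) shows that the limit is a fixed point.

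\textbf{Step 1: the orbit is $(k,\varphi)$-contracting.} Fix $x_0\in X$ and put $x_n=T^nx_0$. Apply (\ref{help2}) with $x=x_{n-1}$, $y=x_n$. The $\gamma$-term is $\gamma(d_\om(x_{n-1},x_{n+1}),a)$. By the triangle $\om$-inequality, monotonicity of $\gamma$ and $(\gamma_2)$, we expect
$$\gamma(d_\om(x_{n-1},x_{n+1}),a)\le \gamma(d_\om(x_{n-1},x_n)\,\om\, d_\om(x_n,x_{n+1}),a)\le \max\{d_\om(x_{n-1},x_n),d_\om(x_n,x_{n+1})\}.$$
Here $\gamma$ must be nondecreasing in its first variable, which is implicit in the framework; if it is not available, we bound directly using the second half of $(\gamma_2)$. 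Hence
$$d_\om(x_n,x_{n+1})\le\varphi\bigl(k,\max\{d_\om(x_{n-1},x_n),d_\om(x_n,x_{n+1})\}\bigr).$$
If the maximum were $d_\om(x_n,x_{n+1})$, then (\ref{extree}) of Remark \ref{workeasy} gives $d_\om(x_n,x_{n+1})=a$. In that case $x_n$ is already a fixed point and the contracting inequality holds trivially. Either way, $d_\om(x_n,x_{n+1})\le\varphi(k,d_\om(x_{n-1},x_n))$. Since $k<\kappa$, Lemma \ref{FrmOP}(\ref{go}) shows $\{x_n\}$ is Cauchy. O-completeness then gives an O-limit $u$, which is unique by Assumption \ref{mainass}.

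\textbf{Step 2: $u$ is a fixed point.} Under (ii), Lemma \ref{Godwillmake}(3) applies directly. Under (iii), apply (\ref{help2}) with $x=x_n$, $y=u$:
$$d_\om(x_{n+1},Tu)\le\varphi\bigl(k,\max\{d_\om(x_n,u),d_\om(x_n,x_{n+1}),d_\om(u,Tu),\gamma(d_\om(x_n,Tu),d_\om(u,x_{n+1}))\}\bigr).$$
By $(\gamma_2)$, the $\gamma$-term is at most $\max\{d_\om(x_n,Tu),d_\om(u,x_{n+1})\}$. So the whole maximum is dominated by $\max\mathcal R_{x_n,u}$, and (\ref{terre}) holds. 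Since (iii) supplies all conditions $(\varphi_i)$, Lemma \ref{Godwillmake}(2) yields $Tu=u$.

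Under (i), we instead pass to the limit in the same inequality. First, $d_\om(x_{n+1},Tu)\to d_\om(u,Tu)$ by sequential continuity of $d_\om$ in the first variable, and similarly $d_\om(x_n,Tu)\to d_\om(u,Tu)$. The other terms tend to $a$, and $\gamma(\cdot,\cdot)\to\gamma(d_\om(u,Tu),a)\le d_\om(u,Tu)$ by $(\gamma_1)$ and $(\gamma_2)$ (using $a\,\om\,\cdot$ monotonicity). Continuity of $\varphi$ in its second variable is only assumed at $a$, so we instead use monotonicity. For any $t>d_\om(u,Tu)$, eventually $d_\om(x_{n+1},Tu)\le\varphi(k,t)$. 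Taking the limit in $n$ and then letting $t$ decrease should give $d_\om(u,Tu)\le\varphi(k,d_\om(u,Tu)^+)$. Combined with (\ref{acee}), this forces $d_\om(u,Tu)=a$. This right-limit issue is the step I expect to be delicate. If it fails, I would argue by contradiction as in Lemma \ref{Godwillmake}(2) and use the strict inequality $\varphi(k,t)<t$ at a fixed $t$ slightly above $d_\om(u,Tu)$.

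\textbf{Step 3: uniqueness.} Suppose $u$ and $w$ are fixed points. Then (\ref{help2}) gives
$$d_\om(u,w)\le\varphi\bigl(k,\max\{d_\om(u,w),a,a,\gamma(d_\om(u,w),d_\om(w,u))\}\bigr)\le\varphi(k,d_\om(u,w)),$$
using $(\gamma_2)$. By (\ref{extree}), $d_\om(u,w)=a$, so $u=w$.

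The main obstacle is Step 1: controlling the $\gamma$-term along the orbit via $(\gamma_2)$. The same difficulty reappears in the limiting argument for case (i).
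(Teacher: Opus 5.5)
Your overall route is the paper's. You show the Picard orbit is $(k,\varphi)$-contracting by absorbing the $\gamma$-term with $(\gamma_2)$ and excluding, via (\ref{extree}), the case where the maximum is $d_\om(x_n,x_{n+1})$. Lemma \ref{FrmOP} then gives Cauchyness, Lemma \ref{Godwillmake} handles (ii) and (iii), a limit argument handles (i), and (\ref{extree}) gives uniqueness. Applying $(\gamma_2)$ directly to the pair $d_\om(x_{n-1},x_n),\,d_\om(x_n,x_{n+1})$ is a slightly tidier version of the paper's case split.

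The monotonicity of $\gamma$ in its first variable that you flag is also used tacitly by the paper. Your proposed fallback, however, does not work. The second half of $(\gamma_2)$ only controls $\gamma$ at first arguments of the form $u\,\om\,v$. The first half only gives $\gamma(d_\om(x_{n-1},x_{n+1}),a)\le d_\om(x_{n-1},x_{n+1})$, which is useless here. So that monotonicity is a genuine extra assumption, shared with the paper.

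The step that does not close as written is the end of case (i). Put $d:=d_\om(u,Tu)$. You correctly obtain $d\le\varphi(k,t)$ for every $t>d$. But (\ref{acee}) then only yields $d\le\varphi(k,t)<t$, i.e.\ $\lim_{t\downarrow d}\varphi(k,t)=d$, which is no contradiction. Your fallback with a fixed $t$ slightly above $d$ fails for the same reason.

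What is missing is continuity of $\varphi(k,\cdot)$ at $d$. The paper uses this silently when it lets $n\to\infty$ to get $d\le\varphi(k,d)$. It does follow from the hypotheses. Since $(\varphi_3)$ holds for all $r_1,r_2\ge 0$ and $\varphi(1,\cdot)$ is the identity (Remark \ref{workeasy}), for $k\in(0,1)$ the map $\varphi(1/k,\cdot)$ is an inverse of $\varphi(k,\cdot)$. Hence $\varphi(k,\cdot)$ is an increasing bijection of $[a,\infty)$ and therefore continuous.

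Concretely, apply $\varphi(1/k,\cdot)$ to $d\le\varphi(k,t)$. This gives $w:=\varphi(1/k,d)\le t$ for all $t>d$, so $w\le d$. But $\varphi(k,w)=d$. If $d>a$ then $w>a$ (since $w=a$ would force $d=\varphi(k,a)=a$), and (\ref{acee}) gives $d=\varphi(k,w)<w\le d$, a contradiction. Hence $d=a$ and $Tu=u$; the case $k=0$ is trivial.
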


\begin{proof} Let $x_0 \in X$ and $\{x_n\}_{n \in \mathbb{N}}$ be a sequence of points in $X$ such that $x_{n+1}=Tx_n$ for $n \geq 0$. For each
 $n \in \mathbb{N}$, $d_\om(x_n,x_{n+1}) = d_\om(Tx_{n-1},Tx_n) \leq \varphi(k,\max \mathcal{A}_{x_n})$, where 
\begin{equation*}
\begin{array}{lcl}
\mathcal{A}_{x_n} &:= & \left\{d_\om(x_{n-1},x_n), d_\om(x_{n-1},x_n), d_\om(x_n,x_{n+1}), \gamma(d_\om(x_{n-1},x_{n+1}), d_\om(x_n,x_n))\right\}
\\
&=&\left\{d_\om(x_{n-1},x_n), d_\om(x_n,x_{n+1}), \gamma(d_\om(x_{n-1},x_{n+1}), a)\right\}.
\end{array}
\end{equation*}
Now, $\gamma(d_\om(x_{n-1},x_{n+1}), a) \leq \gamma(d_\om(x_{n-1},x_{n}) \, \om \, d_\om(x_{n},x_{n+1}), a)$.
\\
If $d_\om(x_n,x_{n+1}) = a$, then $x_n = x_{n+1}$ and $x_n$ is a fixed point of $T$. 
\\
Suppose now that $d_\om(x_n,x_{n+1}) > a$. 
If $d_\om(x_n,x_{n+1})  \geq d_\om(x_{n-1},x_n)$, then, by $(\gamma_2)$, 
$$\gamma(d_\om(x_{n-1},x_{n+1}), a) \leq \gamma(d_\om(x_{n},x_{n+1}) \, \om \, d_\om(x_{n},x_{n+1}), a) \leq d_\om(x_n,x_{n+1})$$ and
\begin{equation} 
\begin{array}{lcl}
d_\om(x_n,x_{n+1}) &\leq& \varphi(k,\max \mathcal{A}_{x_n}) \leq  \varphi(k, \max\{d_\om(x_{n-1},x_n), d_\om(x_n,x_{n+1}), d_\om(x_{n},x_{n+1})\})  
\\&=&\varphi(k, d_\om(x_n,x_{n+1}))<d_\om(x_n,x_{n+1}),
\end{array}
\end{equation}
which is a contradiction. Hence  $d_\om(x_n,x_{n+1})  \leq d_\om(x_{n-1},x_n)$,
$$\gamma(d_\om(x_{n-1},x_{n+1}), a) \leq \gamma(d_\om(x_{n-1},x_{n}) \, \om \, d_\om(x_{n-1},x_{n}), a) \leq d_\om(x_{n-1},x_{n}),$$
and
\begin{equation*}
\begin{array}{lcl}
d_\om(x_n,x_{n+1}) &\leq& \varphi(k,\max \mathcal{A}_{x_n})  \\ &\leq& \varphi(k, \max\{d_\om(x_{n-1},x_n), d_\om(x_n,x_{n+1}),d_\om(x_{n-1},x_{n})\})
\\
&=& \varphi(k,d_\om(x_{n-1},x_n)).
\end{array}
\end{equation*}
Therefore, for $n \in \mathbb{N}$,
\begin{equation*}
\begin{array}{lcl}
d_\om(x_n,x_{n+1}) & \leq & \varphi(k,d_\om(x_{n-1},x_n)).
\end{array}
\end{equation*}
From Lemma \ref {FrmOP}, $\{x_n\}$ is a Cauchy sequence thus it converges to some point $u \in X$.\\\\
For all $n \in \mathbb{N}$,  
\begin{equation}\label{grace}
\left|
\begin{array}{lll}
d_\om(x_{n+1},Tu) =d_\om(Tx_n,Tu) \leq \varphi(k,\max \mathcal{B}_{x_n,u}), \mbox{ where}
\\
\mathcal{B}_{x_n,u}:=\{d_\om(x_{n},u), d_\om(x_{n},x_{n+1}), d_\om(u, Tu), \gamma(d_\om(x_{n},Tu), d_\om(u,x_{n+1}))\}.
\end{array}
\right.
\end{equation}
\noindent
If $d_\om$ is sequentially continuous in the first variable, then as $n \to \infty$, $d_\om(u,Tu) \leq \varphi(k,d_\om(u,Tu))$ hence $d_\om(u,Tu)=a$ and $u=Tu$. 
\\
\\
If (ii) or (iii) hold, then by Lemma \ref{Godwillmake}, $u$ is a fixed point of $T$. Uniqueness follows from (\ref{help3}) and (\ref{extree}).
\end{proof}

\begin{corollary}
Let $(X,d,s)$ be a complete b-metric space, with $s \geq 1$, and $T:X \to X$ a mapping such that for some $k \in (0,1)$, and for all $x,y \in X$,
\begin{equation}
d(Tx,Ty) \leq k \max\left\{d(x,y), d(x,Tx), d(y,Ty), \frac{d(x,Ty)+d(y,Tx)}{2s} \right\}.
\end{equation}
Then $T$ has a unique fixed point. 
\end{corollary}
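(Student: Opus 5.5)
The corollary will follow by specializing Theorem \ref{ZZthm} to the b-metric setting of Example \ref{global}. Take $a=0$, $u \, \om \, v := s(u+v)$, $\varphi(r,u):=ru$ and $\gamma(u,v):=\frac{u+v}{2s}$.

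First, the complete b-metric space $(X,d,s)$ is an O-complete O-metric space $(X,d_\om,0)$ with this $\om$. Indeed, $(O_1)$ and $(O_2)$ are the b-metric axioms, and $(O_3)$ is exactly $d(x,z)\le s(d(x,y)+d(y,z))$. The distance is $0$-upward, and Cauchy and convergent sequences coincide with the b-metric notions. To verify Assumption \ref{mainass}, I check $(U_1)$ and $(U_2)$. The operation $s(u+v)$ is continuous everywhere and nondecreasing. Moreover, $u\,\om\,0=su=0$ if and only if $u=0$. So the space is admissible.

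Second, the contractive condition of the corollary is literally (\ref{help2}) for these $\varphi$ and $\gamma$. Example \ref{global} already records that $\varphi$ satisfies $(\varphi_0)$–$(\varphi_5)$ and that $\gamma$ satisfies $(\gamma_1)$–$(\gamma_2)$. For $(\gamma_2)$ one checks
\[
\gamma(u,v)\le \max\{u,v\}/s\le\max\{u,v\}, \qquad \gamma(s(u+v),0)=\tfrac{u+v}{2}\le\max\{u,v\}.
\]
It remains to check $k<\kappa=\sup\mathcal C_\varphi$. Here I invoke the fact quoted at the end of Example \ref{hmm} from \cite{ourpaper2}, namely that $\mathcal C_\varphi=[0,1)$ for $\om(u,v)=L(u+v)$ and $\varphi(r,u)=ru$; taking $L=s$ gives $\kappa=1$. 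So every $k\in(0,1)$ is admissible, not merely $k<1/s$, which is what the corollary asserts.

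Finally, one of (i)–(iii) of Theorem \ref{ZZthm} must hold. This is the main obstacle: a b-metric need not be continuous, $T$ is not assumed continuous, and $(\varphi_5)$/$(\varphi_6)$ hold only for $k<1/s$. For $k<1/s$, condition (iii) holds by Example \ref{global}, and Theorem \ref{ZZthm} applies directly. For $k\in[1/s,1)$, I would first get the Cauchy property and the limit $u$ from the first part of the proof of Theorem \ref{ZZthm}, which only uses Lemma \ref{FrmOP}. I would then show $u=Tu$ by hand. From
\[
d(u,Tu)\le s\bigl(d(u,x_{n+1})+d(x_{n+1},Tu)\bigr),
\]
bound $d(x_{n+1},Tu)$ through (\ref{grace}), i.e.\ by $k\max\mathcal B_{x_n,u}$. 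In that bound, the terms $d(x_n,u)$ and $d(x_n,x_{n+1})$ tend to $0$, and
\[
\gamma\bigl(d(x_n,Tu),d(u,x_{n+1})\bigr)\le \tfrac{s(d(x_n,u)+d(u,Tu))+d(u,x_{n+1})}{2s}.
\]
Taking $\limsup$ gives $d(u,Tu)\le sk\max\{d(u,Tu),\,d(u,Tu)/2\}$. This closes only when $sk<1$. For the remaining range I would try a subsequence argument: estimate $d(u,Tu)$ by $\limsup_n d(x_{n+1},Tu)$ via $\limsup d(x_{n+1},Tu)\ge d(u,Tu)/s$, and bound that same quantity by $k$ times $\max\{d(u,Tu),\frac12(d(u,Tu)+\limsup d(x_n,Tu))\}$. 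This yields $\limsup d(x_n,Tu)\le k\limsup d(x_n,Tu)$ when the $\gamma$-term dominates, and $\limsup d(x_n,Tu)\le k\,d(u,Tu)$ otherwise. Combined with $d(u,Tu)\le s\limsup d(x_n,Tu)$, the first case forces $\limsup d(x_n,Tu)=0$, hence $u=Tu$, since $d(u,Tu)\le s\cdot 0=0$. The second case gives only $d(u,Tu)\le sk\,d(u,Tu)$, so it again closes only when $sk<1$. In both routes the case $sk\ge1$ remains open, and that is the step I expect to be hardest.

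Uniqueness is routine: if $u,w$ are fixed points, then (\ref{help2}) gives
\[
d(u,w)\le k\max\{d(u,w),0,0,d(u,w)/s\}=k\,d(u,w),
\]
so $u=w$ by (\ref{extree}).
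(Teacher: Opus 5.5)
Your reduction to Theorem~\ref{ZZthm} is the paper's own route, and everything you verify is correct. This covers Assumption~\ref{mainass} for $u\,\om\,v=s(u+v)$, the conditions $(\gamma_1)$--$(\gamma_2)$, $\kappa=1$ (so the Picard orbit is Cauchy for every $k<1$), the case $k<1/s$, and uniqueness. The gap is the one you flag yourself. For $k\in[1/s,1)$, none of (i)--(iii) of Theorem~\ref{ZZthm} is available; already $(\varphi_5)$ fails, since $w\mapsto s(v+kw)$ is an $sk$-contraction. Your $\limsup$ estimates then stop at $d(u,Tu)\le sk\,d(u,Tu)$.

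No further idea will close this, because for $s>1$ the statement is false in that range. The paper's proof has exactly the same hole. It records that $(\varphi_0)$--$(\varphi_6)$ hold only for $k\in[0,1/s)$, yet it combines $\kappa=1$ with Theorem~\ref{ZZthm} as if this covered all $k\in(0,1)$. But $\kappa=1$ only gives the Cauchy property, not that the limit is a fixed point.

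Here is a counterexample. Fix $s>1$, $k\in[1/s,1)$ and $r\in(0,k]$. Let $X=\{u\}\cup\{x_n:n\ge 1\}$, and let $\rho$ be the metric pulled back from $\mathbb{R}$ by $u\mapsto 0$, $x_n\mapsto r^{n-1}$. Put $d=\rho$, except $d(u,x_1)=d(x_1,u):=s$ instead of $1$. Since $\rho\le d\le s\rho$,
\[
d(p,q)\le s\rho(p,q)\le s\bigl(\rho(p,w)+\rho(w,q)\bigr)\le s\bigl(d(p,w)+d(w,q)\bigr),
\]
so $d$ is a b-metric with constant $s$. It is complete: $d$-Cauchy sequences are $\rho$-Cauchy, $(X,\rho)$ is complete because its image $\{0\}\cup\{r^{n-1}:n\ge 1\}$ is closed in $\mathbb{R}$, and $d\le s\rho$.

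Let $Tx_n=x_{n+1}$ and $Tu=x_1$; then $T$ has no fixed point. For $n\neq m$,
\[
d(Tx_n,Tx_m)=|r^n-r^m|=r\,d(x_n,x_m)\le k\,d(x_n,x_m).
\]
For pairs involving $u$,
\[
d(Tu,Tx_n)=d(x_1,x_{n+1})=1-r^n<1\le ks=k\,d(u,Tu).
\]
So the contractive condition holds with this $k$, yet there is no fixed point. Here $x_n\to u$ while $d(x_n,x_1)\to 1<s=d(u,x_1)$. This is precisely the discontinuity that your bound $d(u,Tu)\le s\liminf_n d(x_{n+1},Tu)\le sk\,d(u,Tu)$ cannot overcome when $sk\ge 1$; in this example both inequalities are tight at $sk=1$.

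The correct corollary is therefore the one with $k\in(0,1/s)$, matching the restriction in the paper's second corollary. For that statement your proof is complete.
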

\begin{proof}
The map $T$ satisfies condition (\ref{help2}) with $\varphi:[0,\infty) \times [0,\infty) \to [0,\infty)$ defined by $\varphi(r,u)=ru$ for all $r,u \geq 0$ and $\gamma:[a,\infty) \times [a,\infty) \to [a,\infty)$ defined by $\gamma(u,v)=\frac{u+v}{2s}$ for all $u,v \geq 0$. The function $\varphi$ satisfies conditions $(\varphi_0) - (\varphi_6)$ with $k \in [0,\frac{1}{s})$. The function $\varphi$ satisfies the conditions $(\gamma_1) - (\gamma_2)$. From Proposition 2.1 in \cite{ourpaper2}, $\kappa=1$ for b-metric spaces hence from Theorem \ref{ZZthm}, $T$ has a unique fixed point. 
\end{proof}

\noindent
Next, we replace completeness with the notion of orbital completeness which in turn yields a more general contractive-like condition than (\ref{help2}). 

\begin{definition}
Given a map $T:X \to X$, the O-metric space $X$ is said to be $T$-orbitally complete if every Cauchy sequence which is contained in the orbit $\mathcal{O}(x,\infty):=\{x,Tx,T^2x,\ldots\}$ of $T$ at some point $x \in X$, converges in $X$.
\end{definition}

\noindent
The following theorem is obtained:
\begin{theorem}\label{AAthm}
Let $(X,d_\om,a)$ be an O-metric space satisfying Assumption \ref{mainass}. 
Consider a map $T:X \to X$ such that for all $x,y \in X$,
\begin{equation}\label{help3}
d_\om(Tx,Ty) \leq \varphi(k, \max\{d_\om(x,y), d_\om(x,Tx), d_\om(y,Ty), d_\om(x,Ty), d_\om(y,Tx)\}),
\end{equation}
where $\varphi$ satisfy conditions $(\varphi_0) - (\varphi_5)$ with $k<\kappa$, where $\kappa:=\sup C_\varphi$.
Suppose $X$ is $T$-orbitally complete. Then $T$ has a unique fixed point if any of the following conditions below holds:
\begin{itemize}
\item[(i)] $d_\om$ is sequentially continuous in the first variable;
\item[(ii)] $T$ is sequentially continuous;
\item[(iii)] $\varphi$ also satisfies condition $(\varphi_6)$ for $k$.
\end{itemize}

\end{theorem}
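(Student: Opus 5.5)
Fix $x_0\in X$ and consider the Picard orbit $x_n=T^nx_0$. If $x_n=x_{n+1}$ for some $n$, $x_n$ is already a fixed point, so we assume all consecutive distances exceed $a$. Unlike Theorem \ref{ZZthm}, the contractive condition (\ref{help3}) contains the unweighted terms $d_\om(x,Ty)$ and $d_\om(y,Tx)$. Hence consecutive distances cannot simply be compared. We would instead follow \'{C}iri\'{c}'s orbital approach: first control the diameter of finite orbit segments, then obtain a contracting estimate. For $n\ge1$ put
\[
\delta_n=\max\{d_\om(x_i,x_j):0\le i,j\le n\}.
\]
Apply (\ref{help3}) with $x=x_{i-1}$, $y=x_{j-1}$ for $1\le i,j\le n$. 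All distances on the right-hand side lie among those of $\{x_0,\dots,x_n\}$, so $d_\om(x_i,x_j)\le\varphi(k,\delta_n)<\delta_n$ whenever $\delta_n>a$, by (\ref{acee}). Therefore $\delta_n=d_\om(x_0,x_j)$ for some $j\le n$. Then
\[
\delta_n\le d_\om(x_0,x_1)\,\om\,d_\om(x_1,x_j)\le d_\om(x_0,x_1)\,\om\,\varphi(k,\delta_n).
\]
This has exactly the form (\ref{hilal}) with $u=\delta_n$ and $v=d_\om(x_0,x_1)$. The Separation Principle (Theorem \ref{terrible}), available because $(\varphi_1)$--$(\varphi_5)$ hold, gives
\[
\delta_n\le M:=\som{0}{\infty}\varphi(k^i,d_\om(x_0,x_1))
\]
for all $n$. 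So the whole orbit is bounded by a constant independent of $n$. This is the key step, and it is where $(\varphi_5)$ replaces the classical bound $\frac{1}{1-k}d(x_0,x_1)$.

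Next we iterate the same diameter argument on tails. Let $\delta(\mathcal{O}(x_m,\infty))$ denote the supremum of distances in the orbit from $x_m$. The preceding paragraph, applied to finite segments starting at $x_{m-1}$, shows that every distance $d_\om(x_p,x_q)$ with $p,q\ge m$ is bounded by $\varphi(k,\cdot)$ of a distance in the orbit from $x_{m-1}$. Using monotonicity and $(\varphi_3)$ inductively, we get
\[
d_\om(x_p,x_q)\le\varphi(k^m,M)\qquad\text{for } p,q\ge m.
\]
By $(\varphi_1)$, $(\varphi_2)$ and $(\varphi_0)$, $\varphi(k^m,M)\to\varphi(0,M)=a$ as $m\to\infty$. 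Hence $\{x_n\}$ is Cauchy; alternatively, consecutive terms satisfy $d_\om(x_n,x_{n+1})\le\varphi(k^n,M)$ and Lemma \ref{FrmOP} applies since $k<\kappa$. By $T$-orbital completeness, $x_n\to u$ for some $u\in X$.

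To show $u=Tu$, we treat the three cases as in Theorem \ref{ZZthm}. Under (i), pass to the limit in $d_\om(x_{n+1},Tu)\le\varphi(k,\max\{d_\om(x_n,u),d_\om(x_n,x_{n+1}),d_\om(u,Tu),d_\om(x_n,Tu),d_\om(u,x_{n+1})\})$, using continuity of $\varphi$ at the relevant points. This yields $d_\om(u,Tu)\le\varphi(k,d_\om(u,Tu))$, so (\ref{extree}) gives $u=Tu$. Under (ii), the conclusion is Lemma \ref{Godwillmake}(3). Under (iii), the bound above is exactly (\ref{terre}), so Lemma \ref{Godwillmake}(2) applies; that lemma needs all $(\varphi_i)$, which is why $(\varphi_6)$ is assumed. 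For uniqueness, if $u,w$ are fixed points, (\ref{help3}) gives $d_\om(u,w)\le\varphi(k,d_\om(u,w))$, so $d_\om(u,w)=a$ by (\ref{extree}).

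The main obstacle is the tail estimate. With a non-associative $\om$, the inequality for $\delta_n$ must be routed through the single step $x_0\to x_1\to x_j$ so that only one $\om$ appears and Theorem \ref{terrible} applies. The inductive shift from orbit $m-1$ to orbit $m$ must also be done carefully by taking maxima over finite segments first and only then passing to suprema.
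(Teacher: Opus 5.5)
Your proposal is correct and follows the paper's proof essentially step for step: the diameter of $\mathcal{O}(x,n)$ is attained at some $d_\om(x,T^mx)$ and routed through $Tx$ to give an inequality of the form (\ref{hilal}). It is then bounded uniformly via the Separation Principle and pushed down the tails with $(\varphi_3)$ to $\varphi(k^n,M)\to a$ by $(\varphi_0)$, and cases (ii)--(iii) and uniqueness are handled exactly as in the paper, through Lemma \ref{Godwillmake} and (\ref{extree}).

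The one soft spot, which the paper shares, is case (i). Passing to the limit there uses continuity of $\varphi(k,\cdot)$ at $d_\om(u,Tu)$, whereas $(\varphi_1)$ only gives continuity at $a$. You can avoid this with the case split of Corollary \ref{Ibe}, taking $p_n=d_\om(x_n,Tu)$ and $p=d_\om(u,Tu)>a$, so that eventually $p_{n+1}\le\varphi(k,\max\{p,p_n\})$. Then either some $p_m\le p$, which forces $p_n\le\varphi(k,p)$ for all $n>m$ and hence $p\le\varphi(k,p)$; or $p_n\le\varphi(k^{n-n_0},p_{n_0})\to a$. Both alternatives force $p=a$.
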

\begin{proof} Let $x \in X$, $n \in \mathbb{N}$ and $i, j \in \{1,2,\ldots,n\}$. From (\ref{help3}) we have that:
\begin{equation}\label{okayo}
\begin{array}{lcl}
d_\om(T^ix,T^jx) &\leq& \varphi
\left(k, 
\max\left\{
\begin{array}{lll}
d_\om(T^{i-1}x,T^{j-1}x), d_\om(T^{i-1}x,T^ix), d_\om(T^{j-1}x,T^jx), 
\\
d_\om(T^{i-1}x,T^jx), d_\om(T^{j-1}x,T^ix)
\end{array}
\right\}
\right)
\\
&\leq&  \varphi(k,\delta(\mathcal{O}(x,n))),
\end{array}
\end{equation}
where $\mathcal{O}(x,n):=\{x,Tx,T^2x,\ldots,T^nx\}$, with $\delta(A):=\sup\{d_\om(x,y): x,y \in A\}$ for non-empty set $A$. In particular, for $i=1$ and $j=n$, we have that for all $x \in X$ and $n \in \mathbb{N}$,
\begin{equation}\label{change}
d_\om(Tx,T^nx) \leq \varphi(k,\delta(\mathcal{O}(x,n))).
\end{equation}
Since $\mathcal{O}(x,n)$ is finite and since $d_\om(T^ix,T^jx) < \delta(\mathcal{O}(x,n))$ (from inequality (\ref{okayo}) and Remark \ref{workeasy}), it follows that:
\begin{equation}\label{harder}
\forall n \in \mathbb{N}~ ~ \exists m \in \{1,2,\ldots,n\}: ~ \delta(\mathcal{O}(x,n))=d(x,T^mx).
\end{equation}
For such $m$, we have that:
\begin{equation}\label{thankGod}
\begin{array}{lcl}
d_\om(x,T^mx)  &\leq&   d_\om(x,Tx) \, \om \,  d_\om(Tx,T^mx) 
\\
&\leq& d_\om(x,Tx) \, \om \, \varphi(k, \delta(\mathcal{O}(x,n)))
\\
& \leq & d_\om(x,Tx) \, \om \, \varphi(k, d_\om(x,T^mx)).
\end{array}
\end{equation}
From Lemma \ref{terrible}, we have that:
\begin{equation}\label{willy}
d_\om(x,T^mx) \leq \som{0}{\infty} \varphi(k^i, d_\om(x,Tx)).
\end{equation}
Therefore $\delta(\mathcal{O}(x,n))=d(x,T^mx) \leq \som{0}{\infty} \varphi(k^i, d_\om(x,Tx)).$ Now, the sequence $\{\delta(\mathcal{O}(x,n))\}_{n \in \mathbb{N}}$  is increasing, as the sets $\mathcal{O}(x,n)$ form an increasing sequence. Thus 
\begin{equation}\label{canit}
\delta(\mathcal{O}(x,\infty))=\sup_{n \in \mathbb{N}}    \delta(\mathcal{O}(x,n)) \leq \som{0}{\infty} \varphi(k^i, d_\om(x,Tx)),
\end{equation}
where  $\mathcal{O}(x,\infty):=\{x,Tx,T^2x,\ldots\}$ is the orbit of $T$ at $x$. 
\\
For $n, m \in \mathbb{N}$ with $n<m$, we have from (\ref{change}) that:
\begin{equation}\label{lone}
\begin{array}{lcl}
d_\om(T^nx,T^mx) &=& d_\om(TT^{n-1}x,T^{m-n+1}T^{n-1}x) 
\\
&\leq& \varphi(k,\delta(\mathcal{O}(T^{n-1}x,m-n+1 ))).
\end{array}
\end{equation}
From (\ref{harder}), there is $m_1 \in \{1,2,\ldots,m-n+1\}$ such that $\delta(\mathcal{O}(T^{n-1}x,m-n+1 ))=d_\om(T^{n-1}x,T^{m_1}T^{n-1}x)$ hence, from (\ref{lone}),
\begin{equation}\label{lonea}
d_\om(T^nx,T^mx)  \leq \varphi(k, d_\om(T^{n-1}x,T^{m_1}T^{n-1}x)).
\end{equation}
Another application of (\ref{change}) yields:
\begin{equation}
\begin{array}{lcl}
d_\om(T^{n-1}x,T^{m_1+n-1}x) &=& d_\om(TT^{n-2}x,T^{m_1+1}T^{n-2}x) 
\\
&\leq& \varphi(k,\delta(\mathcal{O}(T^{n-2}x,m_1+1 )))
\\
&\leq& \varphi(k, \delta(\mathcal{O}(T^{n-2}x,m-n+2))),
\end{array}
\end{equation}
which combined with inequality (\ref{lonea}) gives
\begin{equation}\label{process1}
\begin{array}{lcl}
d_\om(T^nx,T^mx)  &\leq& \varphi(k, d_\om(T^{n-1}x,T^{m_1}T^{n-1}x)).
\\
&\leq&  \varphi(k, \varphi(k, \delta(\mathcal{O}(T^{n-2}x,m-n+2))))
\\
&=& \varphi(k^2,\delta(\mathcal{O}(T^{n-2}x,m-n+2))).
\end{array}
\end{equation}
Repeating the process which allowed us to obtain (\ref{process1}) from (\ref{lone}), we have that:
\begin{equation*}
\begin{array}{lcl}
d_\om(T^nx,T^mx)  &\leq& \varphi(k,\delta(\mathcal{O}(T^{n-1}x,m-n+1 )))
\\
&\leq& \varphi(k^2,\delta(\mathcal{O}(T^{n-2}x,m-n+2)))
\\
& \vdots & 
\\ 
&\leq & \varphi(k^n,\delta(\mathcal{O}(x,m))).
\end{array}
\end{equation*}
Therefore,
\begin{equation*}
\begin{array}{lcl}
d_\om(T^nx,T^mx) &\leq & \varphi(k^n,\delta(\mathcal{O}(x,m)))
\\
& \leq & \varphi(k^n,\delta(\mathcal{O}(x,\infty)))
\\
&\leq & \varphi\left(k^n, \som{0}{\infty} \varphi(k^i, d_\om(x,Tx))\right),
\end{array}
\end{equation*}
from (\ref{canit}). As $n \to \infty$, given the continuity of $\varphi$ in the first variable at $0$, we obtain $d_\om(T^nx,T^mx) \to a$, hence $\{T^nx\}$ is a Cauchy sequence; $X$ being $T$-orbitally complete,  $\{T^nx\}$ has a limit, say $u$, in $X$. From (\ref{help3}), if $n \in \mathbb{N}$,
$$d_\om(T^{n+1}x,Tu) \leq \varphi(k, \max\{ d_\om(T^nx,u), d_\om(T^nx,T^{n+1}x), d_\om(u,Tu),d_\om(T^nx,Tu), d_\om(u,T^{n+1}x) \}).$$
If $d_\om$ is sequentially continuous in the first variable, then as $n \to \infty$, $d_\om(u,Tu) \leq \varphi(k,d_\om(u,Tu))$ hence $d_\om(u,Tu)=a$ and $u=Tu$. 
\\
\\
If (ii) or (iii) hold, then by Lemma \ref{Godwillmake}, $u$ is a fixed point of $T$. Uniqueness follows from (\ref{help3}) and (\ref{extree}).
\end{proof}
\noindent
If $\varphi:[0,\infty) \times [0,\infty) \to [0,\infty)$ is the product, then $(\varphi_i)$ is satisfied for all $i$ with $k \in [0,\frac{1}{s})$. Since $\kappa=1$ for b-metric spaces, we obtain the following corollary: 
\begin{corollary}
Let $(X,d,s)$ be a complete b-metric space, with $s \geq 1$, and $T:X \to X$ a map such such that for some $k \in [0,\frac{1}{s})$ and for all $x,y \in X$,
\begin{equation*}
d_\om(Tx,Ty) \leq k \max\{d_\om(x,y), d_\om(x,Tx), d_\om(y,Ty), d_\om(x,Ty), d_\om(y,Tx)\}.
\end{equation*}
If $X$ is $T$-orbitally complete, then $T$ has a unique fixed point.
\end{corollary}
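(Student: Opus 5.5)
The corollary should follow by specializing Theorem \ref{AAthm} to the b-metric setting. I would regard the complete b-metric space $(X,d,s)$ as an O-metric space $(X,d_\om,0)$ with $a=0$ and $u\,\om\,v=s(u+v)$, and write $d_\om=d$.

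The first step is to check Assumption \ref{mainass}. The operation $\om$ is continuous everywhere and nondecreasing in both variables, so $(U_1)$ holds. Moreover, $u\,\om\,0=su=0$ exactly when $u=0$, so $(U_2)$ holds. The metric $d$ is $0$-upward, and $0\,\om\,0=0$, so Assumption \ref{mainass1} also holds.

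Next, I take $\varphi(r,u)=ru$. As recorded in Example \ref{global}, this $\varphi$ satisfies $(\varphi_0)$–$(\varphi_5)$, and also $(\varphi_6)$ for $k\in[0,1/s)$, since $\som{1}{\infty}\varphi(k^i,\cdot)(\epsilon)=\frac{sk}{1-sk}\epsilon$ is continuous at $0$. Concretely:
\begin{itemize}
\item[$(\varphi_3)$] follows from $r_1(r_2u)=(r_1r_2)u$;
\item[$(\varphi_4)$] follows from $r\,s(u+v)=s(ru+rv)$;
\item[$(\varphi_5)$] holds because $w\mapsto s(v+kw)$ is an $sk$-contraction on $[0,\infty)$ when $sk<1$, and so has a unique attracting fixed point.
\end{itemize}
The contractive condition in the corollary is then literally (\ref{help3}) with this $\varphi$.

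For the threshold, I would cite Proposition 2.1 of \cite{ourpaper2}, as the paper does, to get $\kappa=\sup\mathcal C_\varphi=1$ for b-metric spaces. Hence every $k<1/s\le1$ satisfies $k<\kappa$. If I wanted to avoid that citation, I could note that the proof of Theorem \ref{AAthm} never actually uses $k<\kappa$ when (iii) holds. There Cauchyness comes directly from the bound
$$d(T^nx,T^mx)\le k^n\,\som{0}{\infty}\varphi(k^i,d(x,Tx)),$$
which is finite because $sk<1$.

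Completeness of $X$ gives $T$-orbital completeness trivially, since every Cauchy sequence in an orbit converges. Alternative (iii) of Theorem \ref{AAthm} then applies, so $T$ has a unique fixed point.

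The only delicate point is the restriction $k<1/s$, which $(\varphi_5)$ and $(\varphi_6)$ require. This is exactly the hypothesis of the corollary, so no obstacle remains beyond checking these routine conditions.
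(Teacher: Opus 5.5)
Your proposal is correct and follows the paper's own route. The paper likewise specializes Theorem \ref{AAthm} to $u\,\om\,v=s(u+v)$, $a=0$, $\varphi(r,u)=ru$, uses $k<1/s$ to secure $(\varphi_5)$ and $(\varphi_6)$, and cites $\kappa=1$ from \cite{ourpaper2}. Your side remark that the proof of Theorem \ref{AAthm} never actually uses $k<\kappa$ (Cauchyness comes from the separation principle) is accurate, and in any case Example \ref{hmm} with $L=s$ already gives $[0,1/s)\subset\mathcal{C}_\varphi$, hence $k<\kappa$, without the citation.
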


\section{Conclusion and Further Work}

The results of this work highlight the analytic structure underlying iterative processes in O-metric spaces. Rather than beginning directly with contractive mappings, we developed a framework for resolving inequalities governed by the binary operation $\om$ and auxiliary control functions. The introduction of generalized $\om$-series provides a flexible mechanism for analyzing patterned compositions arising from the generalized triangle inequality. In particular, the separation principle established in this paper allows inequalities of the form
\[
u \le v \,\om \, \varphi(k,u)
\]
to be resolved without reference to a specific iterative scheme. This analytic viewpoint makes it possible to derive convergence properties of sequences independently of fixed point arguments and subsequently apply these results to obtain \'{C}iri\'{c}-type theorems as consequences of the general theory.
\\

\noindent
A notable feature of the framework is the systematic use of control functions. The function $\varphi$ plays a role analogous to a multiplicative scaling in classical contraction theory, while the control function $\gamma$ acts as a stabilizing operation satisfying
\[
\max\{\gamma(u,v),\gamma(u\om v,a)\}\le \max\{u,v\}.
\]

\noindent
From a broader perspective, the algebraic substitutions that appear naturally in this framework suggest an intriguing interpretation. In classical analysis, iterative arguments rely fundamentally on the operations of addition, multiplication, and averaging. In the present setting these roles are effectively replaced by the operations $\om$, $\varphi$, and $\gamma$ as follows:
\[
+ \, \longrightarrow \, \om,
\qquad
\times \, \longrightarrow \, \varphi,
\qquad
\frac{u+v}{2} \, \longrightarrow \, \gamma(u,v).
\]
This observation hints at the possibility of developing an alternative calculus-like framework in which the basic operations are no longer derived from field structures but instead arise from more general binary operations equipped with suitable monotonicity and control properties. Although such a perspective is only implicit in the present work, it suggests that the analytic mechanisms developed here may have relevance beyond the context of fixed point theory.
\\
\noindent
%
%

\noindent
Another direction concerns the algebraic nature of the range of the metric. In this paper the values of the O-metric are taken in intervals of the real line endowed with a binary operation $\om$. 
This raises the possibility that O-metrics could naturally take their values in more general algebraic systems, such as non-associative structures including loops or related algebraic objects. Exploring the extent to which the analytic framework developed here can be extended to such settings may provide a deeper understanding of metric-type geometries in non-associative environments.
\\

\noindent
Some open problems naturally arise from the present work. One problem  is to determine whether the separation principle for $\om$-inequalities can be extended under weaker monotonicity assumptions on the operation $\om$. Overall, the results presented here suggest that O-metric spaces provide a fertile setting in which analytic, algebraic, and metric ideas interact. Further investigation of these interactions may reveal new structures and techniques for studying nonlinear problems in generalized metric environments.

\section*{Authors' Statements}

\subsection*{Conflict of interest}
The authors state no conflict of interest.

\subsection*{Availability of data and materials} 
Not Applicable

\subsection*{Funding} 
 None

\subsection*{Authors' Contributions} 
All authors worked on the research, read and approved the final manuscript.

\subsection*{Acknowledgments}
 None

\end{document}